\DeclareMathOperator{\diag}{\operatorname{diag}}
\DeclareMathOperator{\Rad}{\operatorname{Rad}}
\DeclareMathOperator*{\Ast}{\operatorname{\ast}}
\newcommand{\ZZ}{\mathbb{Z}}
\newcommand{\RR}{\mathbb{R}}
\newcommand{\R}{\mathbb{R}}
\newcommand{\QQ}{\mathbb{Q}}
\newcommand{\Q}{\mathbb{Q}}
\newcommand{\CC}{\mathbb{C}}
\newcommand{\NN}{\mathbb{N}}
\newcommand{\HH}{\mathbb{H}}
\newcommand{\Stab}{\operatorname{Stab}}
 \DeclareMathOperator{\End}{End}
\newcommand{\GL}{\operatorname{GL}}
\newcommand{\OO}{\mathcal O}
\newcommand{\Sym}{\operatorname{Sym}}
\newcommand{\Hom}{\operatorname{Hom}}
\DeclareMathOperator{\Tr}{\operatorname{Tr}}
\DeclareMathOperator{\tr}{\operatorname{tr}}
\DeclareMathOperator{\op}{\operatorname{op}}
\DeclareMathOperator{\nred}{\operatorname{nred}}
\newcommand{\Cl}{\mathcal{C}\ell}
\newcommand{\eg}{{\it e.g. }}
\newcommand{\ie}{{\it i.e. }}
\newcommand{\cf}{{\it cf. }}
\theoremstyle{plain}
\newtheorem{theorem}{Theorem}[section]
\newtheorem{corollary}[theorem]{Corollary}
\newtheorem{lemma}[theorem]{Lemma}
\theoremstyle{definition}
\newtheorem{defn}[theorem]{Definition} 
\theoremstyle{definition}
\newtheorem*{defn*}{Definition} 
\theoremstyle{remark}
\newtheorem{rem}[theorem]{Remark}
\theoremstyle{remark}
\theoremstyle{remark}
\theoremstyle{remark}
\newtheorem*{ex*}{Example}
\theoremstyle{remark}
\newtheorem*{exs*}{Examples}
\def\namedlabel#1#2{\begingroup
    #2%
    \def\@currentlabel{#2}%
    \phantomsection\label{#1}\endgroup
}
\title{Computing in arithmetic groups with Vorono\"{\i}'s algorithm}
\author[1]{Renaud Coulangeon}
\author[2]{Gabriele Nebe}
\author[3]{Oliver Braun}
\author[4]{Sebastian Sch\"onnenbeck}
 \affil[1]{Univ. Bordeaux, IMB, UMR 5251,F-33400 Talence, France\\
CNRS, IMB, UMR 5251, F-33400 Talence, France\\
renaud.coulangeon@math.u-bordeaux1.fr}
\affil[2,3,4]{Lehrstuhl D f\"ur Mathematik, RWTH Aachen University,
52056 Aachen, Germany\\
nebe@math.rwth-aachen.de}
\begin{document}

\maketitle

\begin{abstract}
We describe an algorithm, meant to be very general, to compute a presentation of the group of units of an order in a (semi)simple algebra over $\QQ$. Our method is based on a generalisation of Vorono\"i's algorithm for computing perfect forms, combined with Bass-Serre theory. It differs essentially from previously known methods to deal with such questions, \eg for units in quaternion algebras. We illustrate this new algorithm by a series of examples where the computations are carried out completely.
\\
{\sc Keywords:} unit groups of orders; generators; presentation; word problem; lattices; Vorono\"{\i}'s algorithm;
\end{abstract}

\section{Introduction}

Let $\Lambda$ be  an order in  a  (semi-)simple finitely generated 
algebra $A$ over $\QQ$. By definition, this means that
 $\Lambda$ is a subring of $A$ and that, additively, it is a 
free abelian group generated by a basis of $A$ over $\QQ$. 
The determination of the unit group $\Lambda^{\times}$ of such an order
 is in general a difficult task, both from the theoretical 
and computational point of view, and many questions remain open, as regards structural results and general algorithms. We refer the reader to Ernst Kleinert's survey \cite{MR1309127} for a general account on this topic.

In this paper we propose a method, meant to be very general, 
to compute a presentation of $\Lambda^{\times}$. This method is based on a combination of Vorono\"{\i} theory of perfect quadratic forms
 and Bass-Serre theory of graphs of groups. 
The latter is well-known to provide a very versatile tool
 for computing a presentation of a group $\Gamma $ acting on a connected graph 
$X$. In such a situation, a good knowledge of the quotient graph
 $\Gamma \backslash X$ yields virtually all the information on $\Gamma$. 
To be more precise, one has the following fundamental exact sequence (\cite[Theorem 3.6]{MR1239551})
\begin{equation}\label{bs} 
1 \longrightarrow\pi_1(X) \longrightarrow \pi_1(\Gamma \backslash\backslash X) \longrightarrow \Gamma \longrightarrow 1 
\end{equation}

where $\pi_1(X)$ (resp. $\pi_1(\Gamma \backslash\backslash X)$) denotes the fundamental group of the graph $X$
 (resp. of the quotient graph of groups $\Gamma \backslash\backslash X$), see \emph{loc. cit.} for precise definitions. From this exact sequence, one can derive, at least in principle, a presentation of the group $\Gamma$. Of course, to be of any  practical interest, the exact sequence (\ref{bs}) 
must be applied in a context where the fundamental groups $\pi_1(X)$ and $\pi_1(\Gamma \backslash\backslash X)$ are computable as easily as possible, without too much prior knowledge of the structure of $\Gamma$.

The graph on which we will let $\Gamma=\Lambda^{\times}$ act is built as a neighbouring graph of "perfect forms",
 where we use a suitable refinement of the original notion of perfect forms in \cite{Vo1}. 
The idea of using Bass-Serre theory in this context dates back to Soulé's paper \cite{MR0470141}, where this technique was applied to derive the explicit structure of  $SL_3(\ZZ)$ as an amalgamated product of small finite groups. 
Later, Opgenorth \cite{MR1881760} applied the same kind of ideas to 
compute the integral normalizer $\Gamma $ of 
a finite unimodular group, where he only used the surjectivity of 
the map $\pi_1(\Gamma \backslash\backslash X) \longrightarrow \Gamma$, which already allows 
for the computation of a
 generating set of $\Gamma$. 
Opgenorth's methods have been applied in \cite{ThesisHerbert} to compute generators for 
unit groups $\Gamma = \ZZ G^{\times }$ of integral group rings for small groups $G$.
Yasaki \cite{MR2721434} used similar ideas, combined with Macbeath theorem \cite{MR0160848} to obtain a presentation of some Bianchi groups. Here, we propose to use the full strength
 of  (\ref{bs}) to actually get a presentation of $\Gamma$. 
From the exact sequence (\ref{bs}), we see that this amounts essentially
 to computing the fundamental group of the neighbouring graph 
 of perfect forms, which is non-trivial in general 
(the graph is not a tree). There are several ways to do this.
 We choose to view the neighboring relation on perfect forms not only as a graph but as the
 $1$-skeleton of a CW-complex (the well-rounded retract $W$, see Section \ref{wr}).
 For groups acting on such complexes,
 there is a slightly refined version of Bass-Serre theory, 
due to K.-S. Brown (see \cite{MR739633}), 
which allows one to obtain a presentation of the group which 
involves only the $2$-skeleton of $W$. Beside the geometry of this complex, our algorithm only involves the computation of the stabilizers of some vertices or edges as well as "side-pairing" transformations. 
All these computations are performed using the Plesken-Souvignier algorithm, 
as implemented in MAGMA \cite{magma}. In particular, they essentially reduce to isometry testing of lattices and do not require any a priori knowledge of $\Lambda^{\times}$.

For sake of simplicity, we chose to develop the theory for simple algebras over $\Q$. 
The case of semisimple algebras requires only slight modifications.

In some special cases, \eg Bianchi groups or units of quaternion algebras, there are well-known methods based on an action of the relevant group on some \emph{hyperbolic space} (see \eg \cite{MR2073916}) for computing presentations. Our method applies in these cases too, and should be compared to the aforementioned ones, from which they differ essentially in that we use an action on a \emph{Euclidean space}.

The paper is structured as follows: in Sections 2 and 3, we define a certain space of "quadratic forms" acted on by the unit group $\Gamma=\Lambda^{\times}$ we want to study, and review some rather classical material about Vorono\"{\i}'s algorithm and the \emph{"well-rounded retract"} in this context. In Section 4 we explain how to use Bass-Serre theory to obtain a presentation of $\Gamma$. We also show in Section 5 how the previous idea can be used to solve the word problem in $\Gamma$. In Section 6, a selection of examples of applications of our method is presented. The final section provides an outline of the implementation of the algorithm. 

\section{Preliminaries} 
\subsection{Lattices}
Let $A=K^{n\times n}$ be a finitely generated simple algebra over $\QQ$,
 where  $K$ is a  skew field with center $k$ and $K^{n\times n}$ denotes the set of 
$n\times n$-matrices.  Let $V = K^n$ be the simple left $A$-module. Then
$K = \End_A(V)$ and we view $V$ as a right $K$-module.

Let $\Lambda$ be an order in $A$, and  $\Lambda^{\times}$ its group of units. 
We fix some left $\Lambda $-lattice $L$ in $V$ and let $\OO:= \End _{\Lambda }(L)$.
Then $\OO$ is an order in $K$ and $L$ is a right $\OO$-lattice. 
Put
$$\mathfrak M := \End_{\OO}(L)=\left\lbrace M \in K^{n\times n} \mid ML \subset L\right\rbrace.$$ 
If $\OO $ is a maximal order, then also $\mathfrak M$ is maximal, but for
arbitrary orders $\Lambda $ in division algebras $A=K$, one may always choose $L = \Lambda $ to 
achieve $\OO = \Lambda $ and $\Lambda = \mathfrak M  = \End _{\Lambda }(\Lambda )$.
In general $\Lambda \subseteq \mathfrak M $ is of finite index and also its unit group
$$\Lambda^{\times}=\Stab_{\mathfrak M ^{\times}}\Lambda = \{ a\in {\mathfrak M}^{\times } \mid a \Lambda = \Lambda \} .$$ 
has finite index in 
$$\mathfrak M^{\times } = \GL(L)=\left\lbrace a \in K^{n\times n} \mid aL = L\right\rbrace.$$ 
As the Vorono\"{\i}  algorithm is designed to work with endomorphism rings of lattices,
it is more efficient to compute 
 $\mathfrak M ^{\times}$ first and retrieve $\Lambda^{\times}$ by orbit stabiliser routines. 
Nevertheless we try to develop the theory,
 as much as possible, without the assumption that $\Lambda$ is  the endomorphism ring of a lattice.

\subsection{Forms} As explained in the introduction, we want to let the group $\Lambda^{\times}$ act on a space of "forms" associated to the algebra $A$. To that end, we first extend the scalars to $\RR$, and obtain a semi-simple real algebra
$$A_{\RR }\coloneqq   A \otimes _{\QQ } \RR=K_{\RR}^{n\times n}.$$

Let $d$ denote the degree of $K$, so $d^2 = \dim _{k} (K)$, and let
$$ \begin{array}{ll} 
\iota _1 ,\ldots , \iota _s  &   \mbox{ be 
the real places of $k$ that ramify in $K$,} \\
 \sigma _1,\ldots , \sigma _r &
\mbox{ the real places of $k$ that do not ramify in $K$ } \\
\tau _1,\ldots , \tau _t  & 
\mbox{ the complex places of $k$.} 
\end{array}
$$
Then
\begin{equation*}
K_{\RR }\coloneqq   K\otimes _{\QQ } \RR \cong 
\bigoplus _{i=1}^{s} \HH ^{d/2\times d/2} \oplus 
\bigoplus _{i=1}^{r} \RR^{d\times d} \oplus 
\bigoplus _{i=1}^{t} \CC^{d\times d}
\end{equation*}

and
\begin{equation}\label{iso} 
A_{\RR } \cong 
\bigoplus _{i=1}^{s}  \HH^{nd/2\times nd/2} \oplus 
\bigoplus _{i=1}^{r} \RR ^{nd\times nd} \oplus 
\bigoplus _{i=1}^{t}  \CC ^{nd\times nd}
\end{equation}
The ``canonical'' involution  $\ ^*$ (depending on the choice of this
isomorphism)
is defined on 
any simple summand of $K_{\RR }$
to be transposition for $\RR^{d\times d}$, transposition and complex 
(respectively quaternionic) conjugation for $\CC^{d\times d}$ and 
$\HH^{d/2\times d/2}$. 
The resulting involution  on $K _{\RR }$ is again denoted by $\ ^*$.
As usual it defines 
a mapping $\ ^{\dagger} : K_{\RR }^{m\times n} \to K_{\RR }^{n\times m}$
by applying $\ ^*$ to the entries and then 
transposing the $m\times n$-matrices. 
In particular this defines an involution $\ ^{\dagger} $ on 
$A _{\RR} = K_{\RR}^{n\times n} $.
Note that, in general, this  involution  will not fix $A\subset A_\RR$.

\begin{defn}\label{defSigma}
$\Sigma\coloneqq   \Sym (A _{\RR }) \coloneqq    \left\lbrace  F\in A_{\RR} \mid F^{\dagger } = F \right\rbrace  $ 
is the $\RR $-linear subspace of symmetric elements of $A _{\RR} $.
It supports the positive definite inner product 
$$\langle F_1,F_2 \rangle \coloneqq   \tr (F_1F_2 ) $$ 
where $\tr=\tr_{A _{\RR}/\RR}$ is the reduced trace of the semi-simple $\RR $-algebra $A _{\RR}$. Each element of $\Sigma$ can be identified, via (\ref{iso}), with a tuple $ (q_1,\ldots , q_s, f_1,\ldots , f_r, h_1,\ldots , h_t) $ of symmetric (resp hermitian) matrices. Then, one can define the open real cone $\mathcal P$ of positive elements in $\Sigma$ as
$$ {\mathcal P} = \Sym (A _{\RR }) _{>0}\coloneqq   \left\lbrace   (q_1,\ldots , q_s, f_1,\ldots , f_r, h_1,\ldots , h_t) \in \Sigma \mid q_i, f_j ,h_k \mbox{ pos. def.} 
 \right\rbrace  .$$
\end{defn}
The closure of $ {\mathcal P} $ in $\Sigma$ is denoted by $\overline{\mathcal P}$.

Recall that $V=K^{n\times 1}$ is the simple left $A$-module.
Any $F\in \Sigma  $ defines a 
quadratic form on $V_{\RR }$ by 
 $$F[x] \coloneqq   \langle F, x x^{\dagger } \rangle \in \RR  \mbox{ for all } x\in V_{\RR} .$$
This quadratic form is positive definite (resp. positive semidefinite) if and only if $F\in {\mathcal P}$ (resp. $F \in \overline{\mathcal P}$).

The group $\GL_n(K)$ acts on $\Sigma $ by 
\begin{equation}\label{act} 
(F,g) \mapsto g^{\dagger } F g  
\end{equation}
where we embed $A$ into $A_{\RR} $ to define the multiplication. This action preserves the cone $\mathcal P$. 
\subsection{Minimal vectors and Vorono\"{\i} domains}
As before we choose a left $\Lambda $-lattice $L$ in the
simple $A$-module $V=K^n$
and put $\mathfrak M := \End _{\OO} (L)$
(where $\OO := \End _{\Lambda }(L)$). 
Then $\mathfrak M$ is an order in $A$ that contains $\Lambda $ of finite index.

The $L$-minimum of a form  $F\in \mathcal P$ is defined as 
\begin{equation*} \min\nolimits _L (F) \coloneqq   \min_{\ell \in L-\left\lbrace 0\right\rbrace } F[\ell ]\end{equation*}
and the set of $L$-minimal vectors of $F$ as
\begin{equation*} S_L(F) \coloneqq   \left\{ \ell \in L  \mid  F[\ell] = \min\nolimits _{L}(F) \right\}.  \end{equation*}
The $L$-Vorono\"{\i} domain of $F$ (or simply Vorono\"{\i} domain of $F$, if there is no ambiguity on the underlying lattice $L$)  is defined as 
\begin{equation*}
D_{F}\coloneqq   \left\lbrace \sum_{x \in S_L(F) } \lambda_{x }x x^{\dagger } \, , \lambda_x \geq 0 \right\rbrace 
\subset \overline{\mathcal P},
\end{equation*}
the closed convex hull of the rays $\RR_{\geq 0}x x^{\dagger }$ as $x$ ranges over $S_L(F)$.

The Vorono\"{\i} polyhedron $\Omega$ is defined as the closed convex hull of the rays $\RR_{\geq 0}x x^{\dagger }$ as $x$ ranges over $V$.

\begin{defn}
A form $F\in {\mathcal P}$ is $L$-perfect (or simply perfect if there is no ambiguity on the underlying lattice) if one the following equivalent conditions holds
\begin{enumerate}
\item The forms $x x^{\dagger }$, where $x$ ranges over $S_L(F)$, span the whole space $\Sigma$.
\item The $L$-Vorono\"{\i} domain of $F$ has non-empty interior.
\end{enumerate}
\end{defn}

The Vorono\"{\i} domain $D_{F}$ of a perfect form $F \in \mathcal P$ 
is thus an $N$-dimensional polyhedral cone in the Euclidean space
 $\Sigma$, where $N=\dfrac{nd}{2}\left(nd\left[ k:\QQ\right] +r-s\right)$ is the dimension of $\Sigma$. It has finitely many facets, i.e. codimension $1$ faces. To each facet $\mathcal F$ one associates a \emph{direction} $H$, that is a normal facet vector,
 pointing towards the interior of $D_{F}$. In other words, $0\neq H \in \Sigma$ is a direction of $D_{F}$ if :
\begin{itemize}
\item  $\langle H, x x^{\dagger } \rangle \geq 0$ for all $x \in S_L(F) $,
\item the forms $x x^{\dagger }$, as $x$ ranges over the set of minimal vectors of $L$ such that $\langle H, x x^{\dagger } \rangle=0$, generate a hyperplane of $\Sigma$ .
\end{itemize}
The following lemma is at the core of Vorono\"{\i} theory :
 \begin{lemma}\label{deadend}  Let $F$ be a perfect form $F \in \mathcal P$, and $H$ a direction of its Vorono\"{\i} domain. Then there exists a well-defined positive real number $\lambda$ such that $F+\lambda H$ is perfect, and the Vorono\"{\i} domains of $F$ and $F+\lambda H$ share a common facet.
 \end{lemma}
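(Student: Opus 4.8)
The plan is to move from $F$ along the ray $F_\rho \coloneqq F+\rho H$, $\rho\ge 0$, and to follow the piecewise-affine function $g(\rho)\coloneqq \min_L(F_\rho)$. Writing $m\coloneqq \min_L(F)$ and using $F_\rho[\ell]=F[\ell]+\rho\langle H,\ell\ell^{\dagger}\rangle$, the two defining properties of a direction say that $\langle H,\ell\ell^{\dagger}\rangle\ge 0$ for every $\ell\in S_L(F)$, so each minimal vector keeps value $\ge m$, while the \emph{facet vectors} --- those $\ell\in S_L(F)$ with $\langle H,\ell\ell^{\dagger}\rangle=0$ --- keep the exact value $m$. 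Since $\{\ell\in L: F[\ell]\le m+1\}$ is finite and positive definiteness is an open condition, for small $\rho>0$ one has $F_\rho\in\mathcal P$, $g(\rho)=m$, and $S_L(F_\rho)=\{\ell\in S_L(F):\langle H,\ell\ell^{\dagger}\rangle=0\}$. The number to isolate is
\[
\lambda \coloneqq \sup\{\rho\ge 0 : g(\rho')=m \text{ for all }\rho'\in[0,\rho]\},
\]
and I must show it is finite and positive, that $F_\lambda$ is still positive definite, and that it is perfect, sharing a facet with $F$.

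First I would produce a lattice vector on which $H$ is strictly negative, which is what makes $\lambda$ finite. If instead $\langle H,\ell\ell^{\dagger}\rangle\ge 0$ for all $\ell\in L$, then, the lattice directions being dense on the unit sphere of $V_\RR$, one gets $\langle H,xx^{\dagger}\rangle\ge 0$ for all $x\in V_\RR$, i.e. $H$ is positive semidefinite. But then every facet vector lies in the radical $R_H=\ker H$, a proper $K_\RR$-submodule of $V_\RR$, so the forms $\ell\ell^{\dagger}$ attached to facet vectors are confined to the symmetric forms of the sub-problem carried by $R_H$; a dimension count in the formula for $\dim\Sigma$ shows that this subspace has codimension at least $2$, contradicting the fact that the facet vectors span a hyperplane. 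Hence some $\ell_0\in L$ has $\langle H,\ell_0\ell_0^{\dagger}\rangle<0$, and $g(\rho)\le F_\rho[\ell_0]$, which tends to $-\infty$, so $\lambda<\infty$; the local analysis above gives $\lambda>0$. Concavity of $g$ (a minimum of affine functions) shows $g\equiv m$ on $[0,\lambda]$ and $g<m$ beyond, and a norm bound (only finitely many $\ell$ of bounded $F$-value can reach $m$ before a given time) shows that $\lambda$ is attained: there is $\ell^{*}\in L$ with $F_\lambda[\ell^{*}]=m$ and $\langle H,\ell^{*}\ell^{*\dagger}\rangle<0$.

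The main obstacle is to exclude a ``dead end'', i.e. to prove that $F_\lambda$ is genuinely positive definite and not merely a boundary point of $\mathcal P$ on which the lattice minimum happens to stay equal to $m$. The computation defining $\lambda$ gives $F_\lambda[\ell]\ge m$ for every $\ell\in L\setminus\{0\}$, hence $F_\lambda\in\overline{\mathcal P}$ and $\min_L(F_\lambda)=m$; the point is to rule out a non-trivial radical. Here I would use the arithmetic rationality built into the situation: a perfect form is determined by the linear system $\langle F,\ell\ell^{\dagger}\rangle=m$ over its spanning minimal vectors, so $m^{-1}F$ is rational for the natural $\QQ$-structure on $\Sigma$ in which the $\ell\ell^{\dagger}$ and the pairing $\langle\,,\rangle$ are rational; the direction $H$, being normal to the rational hyperplane spanned by the facet vectors, is rational as well. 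Consequently each crossing time is a rational multiple of $m$, so is $\lambda$, and $F_\lambda$ is again rational. A rational positive semidefinite form with $\min_L(F_\lambda)=m>0$ cannot have a non-trivial radical, for that radical would be a rational subspace and hence would contain a non-zero lattice vector on which $F_\lambda$ vanishes. Thus $F_\lambda\in\mathcal P$. I expect this step --- tying positive definiteness to rationality --- to be the delicate heart of the proof, since a rational-free argument fails: a positive semidefinite but singular form can perfectly well have its value-$m$ vectors spanning $\Sigma$.

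Finally I would check perfection and the shared facet. The facet vectors remain minimal for $F_\lambda$, and their rank-one forms already span the hyperplane $H^{\perp}\cap\Sigma$; adjoining $\ell^{*}\ell^{*\dagger}$, which lies off this hyperplane because $\langle H,\ell^{*}\ell^{*\dagger}\rangle\ne 0$, yields a spanning family of $\Sigma$, so $F_\lambda$ is perfect. The common minimal vectors of $F$ and $F_\lambda$ are exactly the facet vectors; their convex hull is a codimension-one face of both $D_F$ and $D_{F_\lambda}$, lying in $H^{\perp}\cap\Sigma$ with $D_F$ on the side $\langle H,\cdot\rangle\ge 0$ and $D_{F_\lambda}$ on the opposite side. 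This is the common facet, and the construction exhibits $\lambda$ as the unique value with these properties, so it is well defined.
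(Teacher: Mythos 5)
Your skeleton (follow the ray $F+\rho H$, show the lattice minimum stays at $m$ up to a finite positive $\lambda$, pick up a new minimal vector there, and conclude perfection from ``hyperplane plus one point off it'') is the classical Vorono\"{\i} argument, and your finiteness step is close in spirit to the paper's, which defines $\lambda$ by the same supremum and reduces everything else to showing $H\notin\overline{\mathcal P}$, delegating the remainder to Opgenorth's Proposition 1.8. But the step you yourself flag as the delicate heart --- excluding the dead end via $\QQ$-rationality --- is genuinely broken in this paper's setting. Perfect forms here are in general \emph{not} rational: the involution $\dagger$ does not preserve $A$, and the paper's first example already exhibits perfect forms such as $F_1$ with entries $2-\sqrt{2}$. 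There is in fact no $\QQ$-structure on $\Sigma$ in which all the $\ell\ell^{\dagger}$, $\ell\in L$, are rational: for a quaternion algebra embedded via $\QQ(\sqrt{2})$ the $\QQ$-span of the $\ell\ell^{\dagger}$ has $\QQ$-dimension strictly larger than $N=\dim_{\RR}\Sigma$, so ``$m^{-1}F$ is rational'' has no meaning of the kind you need. One can salvage rationality over a number field $E$ with a fixed real embedding, but then your punchline fails: a subspace of $V_{\RR}$ defined over $E\neq\QQ$ (e.g.\ the kernel of $x_1-\sqrt{2}\,x_2$) need contain no nonzero lattice vector, so a ``rational'' radical does not produce a lattice vector annihilated by $F_\lambda$. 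Hence $F_\lambda\in\mathcal P$ is not established by your argument.

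Moreover, your parenthetical claim that a rationality-free argument must fail is the opposite of the truth, and the rationality-free argument is exactly how this gap is closed in the sources the paper cites. If $F_\lambda$ were positive semidefinite with nontrivial radical $R$, the set $\left\lbrace x\in V_{\RR} \mid F_\lambda[x]\leq \epsilon\right\rbrace$ is a symmetric convex cylinder over $R$ of infinite volume, so by Minkowski's theorem it contains a nonzero vector of $L$ for every $\epsilon>0$; thus $\inf_{\ell\in L\setminus\{0\}}F_\lambda[\ell]=0$, contradicting $F_\lambda[\ell]\geq m>0$, which you already have by continuity. (Whether the value-$m$ vectors of a singular form could span $\Sigma$ is beside the point: singularity is incompatible with a positive lattice minimum.) This is the content of the results the paper invokes (Martinet, Proposition 13.1.8; Opgenorth, Proposition 1.8 and the discussion following it), which is why the paper's own proof only has to verify $H\notin\overline{\mathcal P}$. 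A secondary caution on your finiteness step: the assertion that forms supported on a \emph{proper $K_{\RR}$-submodule} span a subspace of codimension at least $2$ is false for arbitrary submodules when $k\neq\QQ$ --- a kernel that is full at one infinite place can give codimension $1$ --- and the correct repair is the paper's: the facet vectors lie in $L$, hence span a $K$-subspace of $K^n$ of dimension at most $n-1$, yielding a $K$-rational functional $y$ with $y^{\dagger}x=0$, after which the column-by-column count gives dimension at most $N-n<N-1$. So here too the rational structure enters, but on the lattice side, not through rationality of the forms.
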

 \begin{proof} This follows from \cite[Proposition 1.8]{MR1881760} and its proof. In particular, the real number $\lambda$ can be defined as 
\begin{equation}
\lambda =\sup  \left\lbrace \theta \in \RR_{>0} \mid F+\theta H \in \mathcal{P} \text{ and } \min\nolimits _L ( F+\theta H)= \min\nolimits _L (F)\right\rbrace.
\end{equation}
The only thing to check is that $\lambda< +\infty$, which amounts to proving that $H \notin \overline{\mathcal P}$ (see \cite[Proposition 13.1.8]{MR1957723} or the discussion following \cite[Proposition 1.8]{MR1881760}). Assume by way of contradiction that $H$ is positive. Then its kernel (the set of $x \in V_{\RR}$ such that $Hx=0$) coincides with its radical (the set of $x \in V_{\RR}$ such that $H\left[x\right]=0$), and contains the set $S(\mathcal F)$ of  minimal vectors $x \in L$ whose image $x x^{\dagger }$ in $\Sigma$ generate the facet $\mathcal F$ corresponding to $H$. So these vectors span a $K$-subspace of dimension at most $n-1$ of $K^n$, since otherwise $H$ would be zero, which means that there exists $0\neq y \in K^n$ such that 
\begin{equation}\label{hyper} 
y^{\dagger}x = 0 \text{ for all } x \in S(\mathcal F).
\end{equation}
This implies, in particular, that $n$ is at least $2$ (there is at least one element in the set of minimal vectors belonging to this facet). Finally, the matrices $x x^{\dagger }$, $ x \in S(\mathcal F)$, generate a subset of dimension at most $N-n$ of $\Sigma$ : indeed each of the $n$ columns of
\begin{equation*}
 x x^{\dagger }=\begin{pmatrix}
 x_1x_1^{*} &  x_1x_2^{*} & \cdots &  x_1x_n^{*} \\ 
  x_2x_1^{*} &  x_2x_2^{*} & \cdots &  x_2x_n^{*} \\ 
 \vdots & \vdots &  & \vdots \\ 
  x_nx_1^{*} &  x_nx_2^{*} & \cdots &  x_nx_n^{*}
 \end{pmatrix} 
\end{equation*} 
lies in the hyperplane determined by (\ref{hyper}). This yields a contradiction since $N-n <N-1$. 
\end{proof}

With the notation above, the form $F+\lambda H$ is called the \emph{neighbour} of $F$ in the direction $H$.

\subsection{Vorono\"{\i} algorithm} 
Roughly speaking, Vorono\"{\i} theory, or its variants, says that the Vorono\"{\i} polyhedron $\Omega$ may be tiled by the cones $D_{F}$ as $F$ ranges over the set of \emph{perfect} forms (see below for a more precise statement). There is also a dual formulation, in terms of minimal classes, which will provide the graph on which to apply Bass-Serre theory.

\begin{theorem}\label{vt} 
The $L$-Vorono\"{\i} domains of perfect forms constitute a locally finite exact tessellation of $\mathcal{P}$, that is :
\begin{enumerate}
\item $\mathcal{P}\subset \bigcup_{F \text{ perfect}} D_F$,
\item for any two perfect forms $F$ and $F'$ one has $\mathring{D_F} \cap D_{F'} \neq \emptyset$ if 
and only if $F=\lambda F'$ for some $\lambda \in \RR _{>0}$,
\item \label{neighb} each facet of the Vorono\"{\i} domain $D_F$ of a perfect form $F$ is a common facet of exactly two Vorono\"{\i} domains $D_F$ and $D_{F'}$ of perfect forms $F$ and $F'$,
\item \label{facets} the Vorono\"{\i} domain of a perfect form $F$ intersects only finitely many  Vorono\"{\i} domains of perfect forms.
\end{enumerate}
Moreover this tessellation is finite up to the action of  $\Lambda^{\times}$, in the following sense:
\begin{enumerate}[resume]
\item There are finitely many $\Lambda^{\times}$-inequivalent perfect forms of minimum $1$.
\end{enumerate}
 
\end{theorem}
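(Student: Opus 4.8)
The plan is to transport the classical Voronoï theory of positive definite quadratic forms (as in \cite{MR1957723}, and in the equivariant setting of \cite{MR1881760}) to the cone $\mathcal P \subset \Sigma$, exploiting that $\mathcal P$ is a product of self-adjoint homogeneous cones and that $\langle F_1,F_2\rangle = \tr(F_1F_2)$ realizes the corresponding self-duality. The organizing object I would introduce is the dual \emph{Ryshkov polyhedron}
\[
\mathcal R \coloneqq \left\{ F \in \mathcal P \mid \min\nolimits_L(F) \geq 1 \right\} = \left\{ F \in \Sigma \mid \langle F, \ell\ell^{\dagger}\rangle \geq 1 \ \text{for all}\ \ell \in L\setminus\{0\} \right\}.
\]
This is a \emph{locally finite} polyhedron: since $L$ is discrete and the elements of any compact subset of $\mathcal P$ are uniformly positive definite, only finitely many defining inequalities are active or nearly active near a given point. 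I would first record the two dictionary entries that drive everything: the gradient of the constraint attached to $\ell$ is $\ell\ell^{\dagger}$, and the vertices of $\mathcal R$ are exactly the perfect forms of minimum $1$ (at a vertex the active constraints are $\langle F,\ell\ell^{\dagger}\rangle = 1$ for $\ell\in S_L(F)$, and a vertex is cut out by $N$ independent such equations, which is condition (1) of perfection). Consequently the Voronoï domain $D_F = \operatorname{cone}\{\ell\ell^{\dagger} : \ell\in S_L(F)\}$ is precisely the normal cone to $\mathcal R$ at the vertex $F$.

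Granting this identification, parts (1)--(4) reduce to standard facts about the normal fan of a locally finite polyhedron. The recession cone of $\mathcal R$ is $\overline{\mathcal P}$ (adding a positive semidefinite form preserves every inequality), so the normal fan is a complete fan supported on the dual cone $\overline{\mathcal P}^{*}$, which equals $\overline{\mathcal P}$ by self-duality. Concretely, for $H\in\mathcal P$ the linear functional $\langle H,\cdot\rangle$ is bounded below on $\mathcal R$ (since $\langle H,G\rangle = \tr(HG) > 0$ for $0\neq G\in\overline{\mathcal P}$) and attains its minimum on a face; at a vertex minimizer $F$ one gets $H\in D_F$, which is the covering (1), while a generic $H$ has a unique minimizer, giving the disjointness of interiors (2). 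Facets of adjacent domains correspond to edges of $\mathcal R$: across a given facet Lemma \ref{deadend} supplies the neighbour $F+\lambda H$ and the value of $\lambda$, while the "exactly two'' in (3) is the fan statement that a facet lies on exactly two maximal cones. Local finiteness (4) is immediate from the local finiteness of $\mathcal R$.

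It remains to prove (5), which I expect to be the main obstacle and the only genuinely arithmetic point. I would split it into a convex-geometric bound and a reduction-theoretic input. For the bound, let $F$ be perfect of minimum $1$: its minimal vectors have $F$-length $1$, and perfection forces them to span $V_{\RR}=K^n$ over $\RR$ (otherwise the $\ell\ell^{\dagger}$ would all be supported in a proper subspace of $\Sigma$). Extracting from $S_L(F)$ an $\RR$-basis spanning a finite-index sublattice and applying Hadamard's inequality to its Gram matrix, all of whose diagonal entries equal $1$, bounds the covolume of $F$ above by an absolute constant. Thus the perfect forms of minimum $1$ all have minimum $1$ and uniformly bounded covolume. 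For the arithmetic input I would invoke reduction theory for $\mathfrak M^{\times}$ acting on $\mathcal P$ (Mahler-type compactness, or Borel--Harish-Chandra reduction for self-adjoint cones): forms of bounded minimum and bounded covolume lie in finitely many $\mathfrak M^{\times}$-translates of a compact subset of $\mathcal P$. Since the action is properly discontinuous and the set of perfect forms is discrete by (4), this discrete invariant set meets a relatively compact region of the quotient in finitely many orbits, which gives (5); finiteness up to $\Lambda^{\times}$ then follows from $[\mathfrak M^{\times}:\Lambda^{\times}]<\infty$. The delicate step is checking that the reduction theory of the classical cone of positive definite symmetric matrices transfers verbatim to the product of quaternionic, real and complex Hermitian cones occurring in (\ref{iso}); this is exactly where the self-adjoint homogeneous structure of $\mathcal P$ and the arithmeticity of $\mathfrak M^{\times}$ are essential.
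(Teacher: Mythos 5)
Your proposal is essentially correct, but it follows a genuinely different route from the paper's proof, which is much shorter because it is built almost entirely on citations. For parts (1)--(4) the paper merely checks that the image of $L$ in $V_{\RR}$ is a discrete admissible set and then invokes Opgenorth's dual-cone Vorono\"{\i} theory \cite[Theorem 1.9]{MR1881760}; you instead re-prove these assertions from scratch via the Ryshkov polyhedron $\mathcal R$ and the identification of $D_F$ with the normal cone of $\mathcal R$ at the vertex $F$ --- a viewpoint the paper only mentions in passing in the remark closing Section 3 (citing \cite{MR0276873,MR2537111,MR2466406}). Your minimization argument for (1)--(2) is sound: for $H\in\mathcal P$ one has $\langle H,G\rangle\geq c\norm{G}$ on $\overline{\mathcal P}$ by self-duality, so $\langle H,\cdot\rangle$ is proper on $\mathcal R$, attains its minimum on a compact face, and a vertex of that face is a perfect form with $H\in D_F$. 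Two caveats, though. First, saying the normal fan is ``complete'' on $\overline{\mathcal P}^{*}=\overline{\mathcal P}$ overstates the truth: the union of the domains is only the rational closure $\Omega=\overline{\mathcal{P}}^{K}$, cf.\ Corollary \ref{tesse}, and attainment genuinely fails for boundary functionals. Second, (4) is not ``immediate'' from local finiteness of $\mathcal R$: faces of $\mathcal R$ through a vertex can be unbounded and carry infinitely many vertices, and correspondingly infinitely many Vorono\"{\i} domains share a single extreme ray $\RR_{\geq 0}\,\ell\ell^{\dagger}$; one must interpret (4) inside $\mathcal P$ and argue that an intersection point $H\in\mathcal P$ forces $F$ and $F'$ onto a common \emph{bounded} face (compactness of the minimizer face), of which the finitely many active constraints at $F$ permit only finitely many --- this matches the level of detail at which the paper, via Opgenorth, leaves the matter. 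For (5) you execute the Mahler route (Hadamard bound on the covolume of the sublattice spanned by minimal vectors, then reduction theory), which the paper acknowledges in one sentence but deliberately does not carry out; its actual proof reduces to $\GL(L)$ by finite index and quotes Ash \cite{MR747876}: perfect forms of minimum $1$ form a discrete closed subset of the well-rounded forms, whose quotient modulo $\GL(L)$ is compact. The step you rightly flag as delicate --- transporting Mahler compactness from the full cone of positive definite forms with $\GL_m(\ZZ)$ down to the subcone $\mathcal P$ with the smaller group $\GL(L)$ --- is precisely what Ash's theorem packages, which is why the paper's alternative is quicker; your version buys a more self-contained and quantitative argument, at the price of having to justify that transfer (Borel--Harish-Chandra type reduction theory), while the paper buys brevity at the price of citing the compactness of the well-rounded retract.
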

\begin{proof} The proof of the first four assertions is a direct application of \cite[Theorem 1.9]{MR1881760} (with the terminology used there, one has to check that the image of $L$ in $V_{\RR}$ is a discrete admissible set, which is straightforward). The assertion regarding finiteness can be established using Mahler's compactness theorem and standard arguments from reduction theory. A quick alternative proof can be derived from results of Ash (\cite{MR747876}) as follows: First, since $\left[\GL(L):\Lambda^{\times}\right]$ is finite, it is enough to prove that there are finitely many  $\GL(L)$-inequivalent forms. Now the set $\mathcal V$ of $L$-perfect forms of minimum $1$ is clearly a discrete and closed subset of $ \mathcal P$. Moreover, it is contained in the set of well-rounded forms (see next section), whose quotient modulo $\GL(L)$ is compact (see \cite{MR747876} main theorem, section 2). The conclusion follows.
\end{proof}

The radical $\Rad(F)$ of a form $F \in \overline{\mathcal P}$ is the set of $x \in V_{\RR
}$ such that $F[x]=0$. We say that the radical of $F$ is defined over $K$ if there exists a $K$-subspace $W$ of $V$ such that $\Rad(F)= W \otimes_{\QQ} \RR$.  The rational closure $\overline{\mathcal{P}}^{K}$ of $\mathcal P$ is the set of forms in $\overline{\mathcal P}$, the radical of which is defined over $K$. The following corollary is a straightforward generalization of \cite[Proposition 36]{MR3074816} which was obtained under the restriction $d=1$. Our proof is slightly shorter, since the most difficult part (the fact that $\mathcal P$ is contained in $\Omega$)  is now a simple consequence of Theorem \ref{vt}.
\begin{corollary}\label{tesse}
The Vorono\"{\i} polyhedron $\Omega$ coincides with the rational closure of $\mathcal P$. The Vorono\"{\i} tessellation takes the final form
\begin{equation}\label{desc} 
\mathcal{P}\subset \bigcup_{F \text{ perfect}} D_F=\Omega=\overline{\mathcal{P}}^{K} \subset\overline{\mathcal{P}} .
\end{equation}
\end{corollary}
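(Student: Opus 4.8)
The plan is to read (\ref{desc}) as two trivial outer relations together with a cycle of inclusions among the three middle sets. The relation $\mathcal{P}\subset\bigcup_{F\text{ perfect}}D_F$ is exactly Theorem \ref{vt}(1), and $\overline{\mathcal{P}}^{K}\subset\overline{\mathcal{P}}$ is the very definition of the rational closure, so nothing is needed for these. The substance is to prove $\bigcup_F D_F\subset\Omega$, $\Omega\subset\overline{\mathcal{P}}^{K}$ and $\overline{\mathcal{P}}^{K}\subset\bigcup_F D_F$; chaining these three forces $\bigcup_F D_F=\Omega=\overline{\mathcal{P}}^{K}$ and finishes the proof. This is the same skeleton as in \cite[Proposition 36]{MR3074816}, the point being that one of the steps is now almost free.

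First I would dispose of the two easy inclusions. Each $D_F$ is by definition the convex hull of the rays $\RR_{\geq 0}xx^{\dagger}$ with $x\in S_L(F)\subset V$, while $\Omega$ is the convex hull of the same rays taken over all of $V$; hence $D_F\subset\Omega$ for every perfect $F$, and $\bigcup_F D_F\subset\Omega$. Combined with Theorem \ref{vt}(1) this already gives $\mathcal{P}\subset\bigcup_F D_F\subset\Omega$, and this is precisely the inclusion $\mathcal{P}\subset\Omega$ that was delicate in \emph{loc.\ cit.} but is handed to us for free by the tessellation theorem. For $\Omega\subset\overline{\mathcal{P}}^{K}$ I would argue on generators: each $xx^{\dagger}$ is positive semidefinite, since $(xx^{\dagger})[y]=\langle xx^{\dagger},yy^{\dagger}\rangle\geq 0$, so it lies in $\overline{\mathcal{P}}$, and its radical is $\{y\in V_{\RR}\mid x^{\dagger}y=0\}$, which is cut out over $K$. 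Both properties — positive semidefiniteness and having radical defined over $K$ (the latter because the radical of a positive combination is the intersection of the radicals of its terms, again a $K$-subspace) — pass to the positive combinations that constitute $\Omega$, whence $\Omega\subset\overline{\mathcal{P}}^{K}$.

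It remains to close the cycle with $\overline{\mathcal{P}}^{K}\subset\bigcup_F D_F$, and this is where I expect the real work to lie. For $F\in\mathcal{P}$ it is once more Theorem \ref{vt}(1). For a genuinely degenerate $F\in\overline{\mathcal{P}}^{K}$ with $\Rad(F)=W\otimes_{\QQ}\RR$, $W$ a proper $K$-subspace of $V$, the idea is to descend: because $W$ is defined over $K$ one may split off a $K$-complement $U\cong K^{m}$, observe that $F$ factors through the projection onto $U$ and restricts there to a positive definite form, apply Theorem \ref{vt} on the smaller module $U$, and finally lift the resulting expression as a positive combination of $yy^{\dagger}$, $y\in U\subset V$, back to a Voronoi domain in $\Sigma$. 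The hard part will be to carry out this descent while keeping everything compatible with the $K$-structure, and this is exactly the step that consumes the hypothesis that $\Rad(F)$ be defined over $K$: one must check that the complement, the induced rank-one forms and the pairing $x^{\dagger}y$ behave rationally even though the involution $\ ^{\dagger}$ need not respect $A\subset A_{\RR}$, so that the minimal vectors produced on $U$ are honest vectors of $V$. Granting this, the three inclusions $\bigcup_F D_F\subset\Omega\subset\overline{\mathcal{P}}^{K}\subset\bigcup_F D_F$ collapse to equalities and yield (\ref{desc}).
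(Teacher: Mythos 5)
Your two easy inclusions are fine and match the paper, but your cycle of inclusions does not close: the third inclusion $\overline{\mathcal{P}}^{K}\subset\bigcup_{F}D_F$ is not delivered by the descent you sketch. Splitting off the radical and applying Theorem \ref{vt} (1) on the smaller module $U$ produces an expression of $F$ (up to equivalence) as $\sum_i\lambda_i\tilde{y}_i\tilde{y}_i^{\dagger}$ with $\tilde{y}_i\in V$ --- and that proves membership in $\Omega$ only, \emph{not} membership in a Vorono\"{\i} domain of an $L$-perfect form on $V$. The $y_i$ are minimal vectors of a perfect form for a lattice in $U$, but such a form is not an $L$-perfect form on $V$, and nothing guarantees that the rays $\tilde{y}_i\tilde{y}_i^{\dagger}$ lie simultaneously in the cone spanned by $S_L(F'')$ for a single perfect $F''$ on the big space. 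Note also that Theorem \ref{vt} (1) covers only the open cone $\mathcal{P}$, whereas the degenerate forms at issue lie on its boundary, where the tessellation need not be locally finite (already for $K=\QQ$, $n=2$, infinitely many perfect domains share each rational boundary ray), so no soft limiting argument substitutes. Incidentally, you misidentify the hard part: landing in $\Omega$ only requires vectors in $K^{m}$, so the rationality bookkeeping you worry about is essentially the paper's (short) proof of $\overline{\mathcal{P}}^{K}\subset\Omega$, carried out exactly as you describe via $\GL_n(K)$-equivalence.

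The genuinely new work, which your proposal omits entirely, is the separate inclusion $\Omega\subset\bigcup_{F}D_F$. The paper proves it by a walking argument: given a nonzero $Q=\sum_i\lambda_i x_i x_i^{\dagger}$ with (after rational scaling) $x_i\in L\setminus\{0\}$, if $Q\notin D_{F_0}$ one picks a direction $H$ of $D_{F_0}$ with $\langle H,Q\rangle<0$ and passes to the neighbour $F_1=F_0+\lambda H$ of Lemma \ref{deadend}; then $\langle Q,F_n\rangle$ is strictly decreasing along the walk, while $\langle Q,F_n\rangle=\sum_i\lambda_i F_n[x_i]$ can assume only finitely many values, because there are finitely many perfect forms of minimum $1$ modulo $\GL(L)$ (Theorem \ref{vt} (5)) and $\left\lbrace F[x]:x\in L\right\rbrace\cap[0,\theta]$ is finite and a $\GL(L)$-conjugacy invariant. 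Hence the walk terminates and $Q$ lies in some $D_F$. This finiteness ingredient appears nowhere in your argument, and without it (or an equivalent) your inclusions only yield $\mathcal{P}\subset\bigcup_F D_F\subset\Omega=\overline{\mathcal{P}}^{K}$, not the asserted equality $\bigcup_F D_F=\Omega$. To repair the proposal, keep your descent as a proof of $\overline{\mathcal{P}}^{K}\subset\Omega$ and add the neighbour-walk termination argument for $\Omega\subset\bigcup_F D_F$.
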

\begin{proof} The inclusions
$$\mathcal{P}\subset \bigcup_{F \text{ perfect}} D_F \subset \Omega$$ 
are clear (the first inclusion is a consequence of the previous theorem and the second is obvious from the definitions of $\Omega$ and $D_F$). 

It is also easy to see that $\Omega \subset \overline{\mathcal{P}}^{K}$: Indeed, for any nonzero $F \in \Omega$, there exist vectors $x_1$, \dots, $x_m$ in $V$ and a family of positive real numbers  $\lambda_1$, \dots, $\lambda_m$ such that $F=\sum_{i=1 } ^m \lambda_{i }x_i x_i^{\dagger }$. The radical of such an $F$ is the set of $y \in V_{\R}$ such that  $$0=\sum_{i=1 } ^m \lambda_{i }\langle x_i x_i^{\dagger }, y y^{\dagger } \rangle=\sum_{i=1 } ^m \lambda_{i }\tr \left(x_i^{\dagger } y y^{\dagger } x_i\right)= \sum_{i=1 } ^m \lambda_{i }\tr_{K_{\RR}/\RR}\left(\left(x_i^{\dagger }y\right)\left(x_i^{\dagger }y\right)^{*}\right)$$
which means that $x_i^{\dagger }y=0$ for all $i$, since the $\lambda_i$s are positive, and $\tr_{K_{\RR}/\RR}\left(aa^{*}\right) > 0$ for any nonzero $a \in K_{\RR}$. In other words, $\Rad(F)$ is the intersection of  $t$ hyperplanes in $V_{\R}$ which are clearly defined over $K$ since the $x_i$ are in $V$.

The reverse inclusion $\overline{\mathcal{P}}^{K} \subset \Omega$ can be established  using the same argument as \cite[Proposition 36]{MR3074816}. Consider a form $F \in \overline{\mathcal{P}}^{K}$.  Assuming that $A=K^{n\times n}$, there exists a $K$-subspace $W$ of $V=K^n$ of dimension $m \leq n$ such that $\Rad(F)= W \otimes_{\QQ} \RR$. Consequently, $F$ is $\GL_n(K)$-equivalent to a form of the shape 
$$ \begin{pmatrix}
0 & 0 \\ 
0 & F_W
\end{pmatrix} $$
with $F_W$ in $\mathcal{P}_W = \Sym (B_{\RR }) _{>0}$, where $B=K^{m\times m}\cong \End_{K}(W)$. As already seen, this cone $\mathcal{P}_W$ is contained in the corresponding Vorono\"{\i} cone, which means that there exists vectors $y_1$, \dots, $y_{\ell}$ in $K^m$  and a family of positive real numbers  $\lambda_1$, \dots, $\lambda_{\ell}$ such that $F_W=\sum_{i=1 } ^{\ell} \lambda_{i }y_i y_i^{\dagger }$. But then $F$ is $\GL_n(K)$-equivalent to $$\begin{pmatrix}
0 & 0 \\ 
0 & F_W
\end{pmatrix} =\sum_{i=1 } ^{\ell} \lambda_{i }\begin{pmatrix} 0 \\ y_i \end{pmatrix} \begin{pmatrix} 0 \\ y_i \end{pmatrix} ^{\dagger } \in \Omega,$$ whence the conclusion.

The final step to prove (\ref{desc}) is to show that $\Omega \subset \bigcup_{F \text{ perfect}} D_F$. Let $Q=\sum_{i=1 } ^m \lambda_{i }x_i x_i^{\dagger }$ be a  nonzero element in $\Omega$. We may assume, without loss of generality, that all $\lambda_i$s are $>0$ and that the $x_i$ are in $L \setminus\left\lbrace0\right\rbrace$. Let $F_0$ be a perfect form with $L$-minimum $1$. If $Q \notin D_{F_0}$, then there exists a direction $H$ of $D_{F_0}$ such that $\langle H, Q \rangle < 0$. 
Consequently, if $F_1=F_0+\lambda H$ is the neighbour of $F_0$ in the direction $H$, then 
\begin{equation}
\langle Q, F_1 \rangle = \langle Q, F_0 \rangle + \lambda \langle H, Q \rangle < \langle Q, F_0 \rangle .
\end{equation}
We can pursue this process as long as $Q$ is not found to belong to the Vorono\"{\i} domain of a perfect form, and build a sequence $\left(F_n\right)_{n \in \NN}$ of perfect forms in $\mathcal V$, the set of perfect forms with $L$-minimum $1$, such that  the sequence $\left(\langle Q, F_n \rangle\right)_{n \in \NN}$ is strictly decreasing. On the other hand, the sequence $\left(\langle Q, F_n \rangle\right)_{n \in \NN}$ is easily seen to assume only finitely many values. Indeed, we have
\begin{equation}\label{six} 
\langle Q, F_0 \rangle \geq \langle Q, F_n \rangle=  \sum_{i=1 } ^{\ell} \lambda_i F_n[x_i]
\end{equation} 
and we know that for every positive definite form $F$ and positive $\theta$, the set $F[\theta] = \left\lbrace F[x], x \in  L\right\rbrace \cap \left[0,\theta\right]$ is finite and depends on $F$ only up to $\GL(L)$-conjugacy. Since $\mathcal V \slash\GL(L)$ is finite, the right-hand side of (\ref{six}) can thus take only finitely many values. This shows that the process must terminate, and $Q$ belongs to the Vorono\"i domain of a perfect form.\end{proof}

\section{A $CW$-complex}\label{wr} 

The Vorono\"{i} tessellation of Theorem \ref{vt} yields a cellular decomposition of  $\mathcal P$. Dual to it, one has a natural $CW$-complex, acted on by $\Gamma=\Lambda ^{\times}$, carried by the set of \textit{well-rounded} forms (see definition below). This cell-complex has been studied by many authors, especially Avner Ash in \cite{MR747876}, to which we refer in what follows. 
\begin{defn}
A form $F \in \mathcal P$ is \emph{well-rounded} if its set of  minimal vectors   $S_L(F)$ contains a
$K$-basis of $V$.
\end{defn}
The cell structure on $\mathcal P$ is induced by the decomposition into minimal classes, which are defined as follows :
\begin{defn}
Two elements $F_1$ and $F_2 \in {\mathcal P}$ are called
\emph{minimally equivalent with respect to $L$}, if $S_L(F_1) = S_L(F_2)$. We denote by $\Cl_L (F) := \left\lbrace  H \in {\mathcal P} \mid S_L(H) = S_L(F) \right\rbrace$ the \emph{minimal class} of $F$.
If $C=\Cl _L (F)$ is a minimal class then we define
$S_L(C) = S_L(F)$ the associated set of minimal vectors.
A minimal class $C=\Cl _L (F)$ is called \emph{well rounded} if the form $F$ is.
\end{defn}

%These minimal classes are defined by finitely many linear inequalities, and are therefore convex polyhedra in the space $\Sigma$. 
%Moreover, $0$-cells (\ie cells reduced to one form) are in one-to-one correspondence with perfect forms up to real positive homotheties.

One has the following equivalent characterizations of well-rounded forms (resp. classes): 
\begin{lemma}\label{cwr}  Let $\mathring{D_F}$ denote the relative interior of the Vorono\"{\i} domain of a form $F$ (\ie $D_F$ deprived of its proper faces). Then, the following assertions are equivalent 
\begin{enumerate}
\item\label{un}  $F \in \mathcal P$ is well-rounded,
\item\label{deux}  $\mathring{D_F} \cap \mathcal P \neq \emptyset$,
\item\label{trois}  $D_F \not\subset \partial \mathcal P$.
\end{enumerate}
\end{lemma}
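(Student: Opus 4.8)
The plan is to establish the cycle of implications $(\ref{un}) \Rightarrow (\ref{deux}) \Rightarrow (\ref{trois}) \Rightarrow (\ref{un})$. The whole argument rests on one elementary positivity observation, which is in fact already contained in the proof of Corollary \ref{tesse}: given finitely many vectors $x_1, \dots, x_m \in V_{\RR}$ and scalars $\lambda_1, \dots, \lambda_m > 0$, the positive semidefinite form $\sum_i \lambda_i x_i x_i^{\dagger}$ lies in $\mathcal P$ (is positive definite) if and only if the $x_i$ span $V_{\RR}$ over $K_{\RR}$. Indeed, the radical computation of that corollary shows $\Rad\left(\sum_i \lambda_i x_i x_i^{\dagger}\right) = \{\, y \in V_{\RR} \mid x_i^{\dagger} y = 0 \text{ for all } i \,\}$, because each $\lambda_i$ is positive and $\tr_{K_{\RR}/\RR}(a a^{*}) > 0$ for $a \neq 0$; this radical is trivial precisely when the $x_i$ span $V_{\RR}$, by non-degeneracy of the pairing $(x,y) \mapsto x^{\dagger} y$. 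Since the minimal vectors lie in $V$, spanning $V$ over $K$ and spanning $V_{\RR}$ over $K_{\RR}$ amount to the same thing. I will refer to this as the \emph{spanning criterion}.

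For $(\ref{un}) \Rightarrow (\ref{deux})$ I would also invoke one fact from convex geometry: for a finitely generated cone $D = \sum_j \RR_{\geq 0} v_j$, every strictly positive combination $\sum_j \mu_j v_j$ with all $\mu_j > 0$ lies in the relative interior $\mathring D$, since $\mathring D = \sum_j \mathring{(\RR_{\geq 0} v_j)} = \sum_j \RR_{>0} v_j$ (relative interior being additive over Minkowski sums, and each generator $x x^{\dagger}$ being nonzero). Assuming $F$ well-rounded, I set $G \coloneqq \sum_{x \in S_L(F)} x x^{\dagger}$. As this is a strictly positive combination of the generators of $D_F$, it lies in $\mathring{D_F}$; and since $S_L(F)$ contains a $K$-basis of $V$, hence spans, the spanning criterion gives $G \in \mathcal P$. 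Thus $\mathring{D_F} \cap \mathcal P \neq \emptyset$. This step genuinely needs the \emph{relative} interior, since a well-rounded but non-perfect form may have a Vorono\"i domain $D_F$ that is not full-dimensional in $\Sigma$.

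The implication $(\ref{deux}) \Rightarrow (\ref{trois})$ is immediate: $\mathring{D_F} \subseteq D_F$, and a point of $\mathcal P$ cannot lie in $\partial\mathcal P$ (as $\mathcal P$ is open), so any point of $\mathring{D_F} \cap \mathcal P$ witnesses $D_F \not\subset \partial\mathcal P$. For $(\ref{trois}) \Rightarrow (\ref{un})$ I would pick a positive definite $G \in D_F$ and write $G = \sum_{x \in S_L(F)} \lambda_x\, x x^{\dagger}$ with $\lambda_x \geq 0$. Discarding the vanishing coefficients gives $G = \sum_{x \in T} \lambda_x\, x x^{\dagger}$ with $T = \{ x \in S_L(F) \mid \lambda_x > 0\}$ and all coefficients positive; since $G \in \mathcal P$, the spanning criterion forces $T$ to span $V$, so $S_L(F) \supseteq T$ contains a $K$-basis and $F$ is well-rounded.

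The only point requiring any real care is the spanning criterion, and this has effectively been carried out already in the proof of Corollary \ref{tesse}; everything else is formal convex geometry. I therefore anticipate no serious obstacle, the one thing to keep in mind being to phrase $(\ref{un}) \Rightarrow (\ref{deux})$ via the relative interior, so as to accommodate Vorono\"i domains that fail to be full-dimensional.
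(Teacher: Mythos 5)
Your proposal is correct and follows essentially the same route as the paper: the same radical computation via $\tr_{K_{\RR}/\RR}\left(aa^{*}\right)>0$, the same witness $\sum_{x \in S_L(F)} x x^{\dagger}$ for $(\ref{un})\Rightarrow(\ref{deux})$, and the same non-degeneracy argument for $(\ref{trois})\Rightarrow(\ref{un})$ (which the paper phrases contrapositively). Your added justifications --- the relative-interior fact for Minkowski sums, which the paper asserts implicitly, and the equivalence of spanning over $K$ versus $K_{\RR}$ --- are correct refinements rather than a different approach.
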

\begin{proof}
Assume that $F$ is well-rounded, and consider the form $H=\sum_{x \in S_L(F)} x x^{\dagger } \in \mathring{D_F}$. For any $y \in \Rad(H)$ one has
$$0=H\left[y\right]=\sum_{x \in S_L(F)}\langle x x^{\dagger }, y y^{\dagger } \rangle=\sum_{x \in S_L(F)}\tr \left(x^{\dagger } y y^{\dagger } x\right)=\sum_{x \in S_L(F)}\tr_{K_{\RR}/\RR}\left(\left(x^{\dagger }y\right)\left(x^{\dagger }y\right)^{*}\right)$$
whence $x^{\dagger }y=0$ for all $x \in S_L(F)$, hence $y=0$ since $S_L(F)$ spans $K^n$. Thus $H \in \mathring{D_F} \cap \mathcal P$ which shows that $(\ref{un})\Rightarrow(\ref{deux})$. The implication $(\ref{deux})\Rightarrow(\ref{trois})$ is obvious. As for $(\ref{trois})\Rightarrow(\ref{un})$, we note that if $F$ is not well-rounded, then one can find a non zero $y \in K^n$ such that $y^{\dagger}x=0$ for all $x \in S_L(F)$, whence we deduce that $xx^{\dagger}\left[y\right]=0$ for all $x \in S_L(F)$, which implies that $y$ belongs to the radical of every $H \in D_F$. Thus $ D_F\subset \partial \mathcal P$.
\end{proof}

The action (\ref{act}) of  $\GL_n(K)$ on $\Sigma $, restricted to its subgroups  $\Lambda^{\times}\subset\mathfrak M^{\times}=\GL(L)$, induces an action on the set of minimal classes.

%Two forms in $\Sigma $ are called \emph{$L$-isometric}, 
%if they are in the same $\mathfrak M^{\times}$-orbit.
%For $F\in {\mathcal P}$ we denote by 
%$$\Aut _L (F) :=  \left\lbrace  g\in \GL(L) \mid g^{\dagger} F g = F \right\rbrace  $$
%the \emph{automorphism group} of $F$. 
Clearly, because of positive definiteness, the stabilizer $\Stab_{\Lambda^{\times}}(F):=  \left\lbrace  g \in \Lambda ^{\times} \mid g^{\dagger} F g = F \right\rbrace$ is always a finite subgroup of $\Lambda^{\times}$. 
We can define similarly the stabilizer
of a minimal class as
$$\Stab _{\Lambda^{\times}} (C) = \left\lbrace  g\in {\Lambda^{\times}} \mid g S_L(C) = S_L(C) \right\rbrace.$$
%is called the \emph{automorphism group} of the class,
%$$\Aut _L (C) = \left\lbrace  g\in \GL(L) \mid g S_L(C) = S_L(C) \right\rbrace.$$
\begin{lemma}
The stabilizer $\Stab _{\Lambda^{\times}} (C)$ of a well-rounded class is finite.
\end{lemma}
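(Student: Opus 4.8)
The plan is to realize $\Stab_{\Lambda^{\times}}(C)$ as a subgroup of the symmetric group on the finite set $S_L(C)$ of minimal vectors attached to the class. Write $C = \Cl_L(F)$ with $F \in \mathcal P$ positive definite and $S_L(C) = S_L(F)$. First I would record that $S_L(C)$ is finite: it consists of the lattice points $\ell \in L$ realizing the minimum $\min_L(F)$, and there are only finitely many points of the lattice $L$ inside the bounded ellipsoid $\{F[x] \leq \min_L(F)\}$. By the very definition of the stabilizer, every $g \in \Stab_{\Lambda^{\times}}(C)$ satisfies $g\,S_L(C) = S_L(C)$, so left multiplication $x \mapsto gx$ restricts to a permutation of $S_L(C)$. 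Since $(gh)x = g(hx)$, this gives a group homomorphism
$$
\phi : \Stab_{\Lambda^{\times}}(C) \longrightarrow \Sym\bigl(S_L(C)\bigr)
$$
into a finite group.

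Next I would prove that $\phi$ is injective. Suppose $g \in \ker\phi$, so that $gx = x$ for every $x \in S_L(C)$. Because $C$ is \emph{well-rounded}, the set $S_L(C)$ contains a $K$-basis $b_1, \ldots, b_n$ of $V = K^n$. Now $g \in \Lambda^{\times} \subset A = K^{n\times n} = \End_K(V)$ acts on $V$ as a right-$K$-linear automorphism, hence is completely determined by its values on a $K$-basis. From $g b_i = b_i$ for all $i$ we deduce $g = \id$. Thus $\ker\phi$ is trivial, and $\Stab_{\Lambda^{\times}}(C)$ embeds into the finite group $\Sym(S_L(C))$, which establishes finiteness.

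The only genuinely essential ingredient is the well-roundedness hypothesis, which is exactly what forces an element fixing all minimal vectors to fix a full $K$-basis and therefore to be the identity; without it the minimal vectors would span only a proper $K$-subspace of $V$, and $\ker\phi$ could contain nontrivial units fixing $S_L(C)$ pointwise. Beyond this, the argument is entirely formal, using nothing more than the finiteness of $S_L(C)$ and the $K$-linearity of the $\Lambda^{\times}$-action on $V$. I therefore expect the verification of injectivity via the $K$-basis to be the crux of the matter, while the reduction to a permutation action on $S_L(C)$ is routine.
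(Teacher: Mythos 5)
Your proof is correct, but it takes a genuinely different route from the paper. The paper's argument is a one-line reduction: by \cite[Lemma 5.3]{CN}, $\Stab_{\Lambda^{\times}}(C)=\Stab_{\Lambda^{\times}}(T_C^{-1})$ where $T_C=\sum_{x\in S_L(C)}xx^{\dagger}$ is the canonical form of the class, which lies in $\mathcal P$ precisely because $C$ is well rounded (this is the same radical computation as in the proof of Lemma \ref{cwr}); the stabilizer of a positive definite form is finite, and the lemma follows. You instead embed $\Stab_{\Lambda^{\times}}(C)$ directly into the finite group $\Sym\bigl(S_L(C)\bigr)$ via the permutation action $x\mapsto gx$, and prove injectivity by observing that a right-$K$-linear map fixing the $K$-basis of $V$ contained in $S_L(C)$ must be the identity. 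Both arguments invoke well-roundedness at the decisive moment, but in different guises: the paper uses it to guarantee positive definiteness of $T_C$, while you use it to kill the kernel of the permutation representation — and your side remark that without it the kernel could contain nontrivial units fixing $S_L(C)$ pointwise is exactly right. What your approach buys is self-containedness (no appeal to \cite{CN}, only finiteness of $S_L(C)$ and faithfulness of the action of $A=\End_K(V)$ on a basis); what the paper's approach buys is alignment with the surrounding machinery, since the canonical form $T_C$ and the finiteness of stabilizers of positive forms are already in place and are reused elsewhere, and it yields the slightly stronger structural fact that $\Stab_{\Lambda^{\times}}(C)$ is itself the stabilizer of an explicit positive form, not merely a subgroup of a symmetric group.
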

\begin{proof}
It follows from \cite[Lemma 5.3]{CN} that $\Stab _{\Lambda ^{\times}} (C)=\Stab _{\Lambda ^{\times}} (T_C^{-1})$ where
$ T_C:= \sum _{x\in S_L(C)} x x^{\dagger } \in {\mathcal P} $ is the canonical form 
associated to $C$.
So 
$\Stab _{\Lambda ^{\times}} (C)$ is the stabilizer of some positive form and therefore
a finite group.
\end{proof}

Positive real homotheties preserve minimal equivalence and the set of well-rounded forms. The quotient $\widetilde{W}$ of $W$ by these homotheties inherits a well-defined $CW$-complex structure with the following properties.

\begin{theorem}[\cite{MR747876}]
Let $W$ be the set of well-rounded forms in $\mathcal P$, and $\widetilde{W}=\RR _{>0}\backslash W$.
Then the correspondence
$$\Cl _L (F) \longleftrightarrow \mathring{D_F} \cap \mathcal P $$
is an inclusion-reversing bijection between the set of cells  of $\widetilde{W}$, \ie \emph{minimal classes}, and the set of open Vorono\"{\i} domains not contained in the boundary of $\mathcal P$.  In particular, $0$-cells of $\widetilde{W}$ correspond to perfect forms in this bijection. The group $\Lambda^{\times}$ acts cellularly on $\widetilde{W}$, and the cells have finite stabilizers.
\end{theorem}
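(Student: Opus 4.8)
The plan is to read off both cell structures from the Voronoï tessellation of Theorem~\ref{vt} and to identify them as dual decompositions of the well-rounded locus. First I would check that the correspondence is well defined. The Voronoï domain $D_F=\{\sum_{x\in S_L(F)}\lambda_x xx^{\dagger}:\lambda_x\geq 0\}$ depends on $F$ only through its set of minimal vectors $S_L(F)$, hence only through the minimal class $C=\Cl_L(F)$; so the assignment $C\mapsto \mathring{D_F}\cap\mathcal P$ makes sense on classes. By Lemma~\ref{cwr}, a class $C$ is well-rounded precisely when $\mathring{D_F}\cap\mathcal P\neq\emptyset$, equivalently $D_F\not\subset\partial\mathcal P$; thus well-rounded classes map exactly into those open Voronoï domains that meet $\mathcal P$.

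The heart of the matter is bijectivity, i.e. that the minimal-class decomposition of $W$ and the face structure of the tessellation $\mathcal P\subset\bigcup_F D_F=\Omega$ of Corollary~\ref{tesse} are dual cell decompositions. For surjectivity I would argue that every face $G$ of the fan $\{D_F:F\text{ perfect}\}$ not contained in $\partial\mathcal P$ is itself the Voronoï domain of a well-rounded form: choosing a form $F'$ in the relative interior of the corresponding minimal stratum realizes $G=D_{F'}$, so that $\mathring G\cap\mathcal P=\mathring{D_{F'}}\cap\mathcal P$. For injectivity one must recover $S_L(C)$ from the cell $\mathring{D_F}\cap\mathcal P$. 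The clean way to do this is through the dual pairing $\langle F',-\rangle$: a form $F'\in\mathcal P$ has $x\in S_L(F')$ iff the ray $\RR_{\geq 0}xx^{\dagger}$ lies on the face of $\Omega$ on which $\langle F',-\rangle$ is minimized among the generating rays, so that $S_L$ and the Voronoï face determine one another, which is exactly the order-reversing duality between a regular CW-complex and its dual. The inclusion-reversing property then falls out: $C'$ lies in the closure of $C$ iff more vectors become minimal, i.e. $S_L(C)\subseteq S_L(C')$, iff $D_F\subseteq D_{F'}$, reversing the containment. I expect this duality — in particular the faithful recovery of $S_L(C)$ from the face, where non-minimal lattice vectors $y$ with $yy^{\dagger}\in D_F$ must be excluded — to be the main obstacle, and the point at which the hypotheses of Ash's theorem \cite{MR747876} have to be invoked carefully.

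For the statement about $0$-cells, a minimal class $C=\Cl_L(F)$ is $0$-dimensional in $\widetilde W$ exactly when the relations $\langle H,xx^{\dagger}\rangle=\min_L(H)$ for $x\in S_L(F)$ force $H\in\RR_{>0}F$. Writing $H-(m/m_0)F$ where $m=\min_L(H)$ and $m_0=\min_L(F)$, this difference pairs to zero against every $xx^{\dagger}$, so by nondegeneracy of $\langle\,,\,\rangle$ it vanishes iff the $xx^{\dagger}$ span $\Sigma$, i.e. iff $F$ is perfect; dually $D_F$ is then $N$-dimensional, the top cell of the fan, in agreement with the inclusion-reversal.

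Finally, for the equivariance I would note that $(g^{\dagger}Fg)[x]=F[gx]$, so for $g\in\Lambda^{\times}\subseteq\GL(L)$ one has $\min_L(g^{\dagger}Fg)=\min_L(F)$ and $S_L(g^{\dagger}Fg)=g^{-1}S_L(F)$; hence the action (\ref{act}) permutes minimal classes, and therefore their associated Voronoï cells, so it is cellular on $\widetilde W$. The stabilizer of a cell is $\Stab_{\Lambda^{\times}}(C)$, which is finite by the lemma proved immediately above.
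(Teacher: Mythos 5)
Your route is genuinely different from the paper's. The paper proves nothing directly: for maximal orders it simply quotes the main theorem of \cite{MR747876}, reduces the general case by noting that $\Lambda^{\times}$ has finite index in the unit group of a maximal overorder (the complex $\widetilde{W}$ depends only on $L$, not on $\Lambda$, so only the action and stabilizer statements need restricting), and points to Gunnells' appendix in \cite{MR2289048} for the duality. You instead sketch a direct proof of the duality from Theorem \ref{vt}, Lemma \ref{cwr} and Corollary \ref{tesse}. The parts you actually carry out are sound: well-definedness since $D_F$ depends only on $S_L(F)$; the $0$-cell characterization, where pairing $H-(m/m_0)F$ against a spanning set $\{xx^{\dagger}\colon x\in S_L(F)\}$ is literally the definition of perfection; inclusion-reversal via $C'\subseteq\overline{C}\iff S_L(C)\subseteq S_L(C')$ (the nontrivial implication follows from the segment $(1-t)F'+tF$, which stays in $C$ for $t\in(0,1]$); and equivariance from $(g^{\dagger}Fg)[x]=F[gx]$ with finite stabilizers from the preceding lemma. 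Notably, none of this uses maximality of $\Lambda$, so a completed direct proof would bypass the paper's finite-index reduction entirely.

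Two steps, however, are not yet proofs. First, surjectivity is circular as written: ``choosing a form $F'$ in the relative interior of the corresponding minimal stratum'' presupposes that a nonempty stratum corresponds to the face $G$, which is exactly what is to be shown. The standard repair is a perturbation: a face of the polyhedral cone $D_F$ ($F$ perfect, $\min_L(F)=1$) is exposed, say $G=D_F\cap H^{\perp}$ with $\langle H,\cdot\rangle\geq 0$ on $D_F$; then for small $t>0$ one checks $S_L(F+tH)=\{x\in S_L(F)\colon xx^{\dagger}\in G\}$, using the same finiteness/continuity argument as in the proof of Lemma \ref{deadend}, so $D_{F+tH}=G$, and Lemma \ref{cwr} converts $G\not\subset\partial\mathcal P$ into well-roundedness of $F+tH$. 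Second, you correctly single out the recovery of $S_L(C)$ from the face as the crux, but you leave it to ``invoking Ash carefully''; moreover your proposed mechanism (reading $S_L$ off the extreme rays) is unreliable here, since for number fields with infinite unit groups the ray $\RR_{\geq 0}xx^{\dagger}$ need not be extreme among the rays $yy^{\dagger}$ (e.g.\ for real quadratic $k$ and a fundamental unit $\varepsilon$, the tuple $(1,1)$ is a positive combination of $(\varepsilon^2,\varepsilon^{-2})$ and $(\varepsilon^{-2},\varepsilon^{2})$). The gap closes with a coefficient-sum argument instead: let $C_1\neq C_2$ have $D_{C_1}=D_{C_2}$, scale so $\min_L(F_i)=1$ for $F_i\in C_i$ (classes are scale-invariant), and for $y\in S_2$ write $yy^{\dagger}=\sum_{x\in S_1}\lambda_x\, xx^{\dagger}$ with $\lambda_x\geq 0$. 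Evaluating against $F_2$ gives $1=F_2[y]=\sum_x\lambda_x F_2[x]\geq\sum_x\lambda_x$, while evaluating against $F_1$ gives $F_1[y]=\sum_x\lambda_x\leq 1$; since $F_1[y]\geq 1$ always, $y\in S_L(F_1)=S_1$. Hence $S_2\subseteq S_1$ and by symmetry $S_1=S_2$. This also disposes of your worry about non-minimal $y\in L$ with $yy^{\dagger}\in D_F$: for those the coefficient sum exceeds $1$.

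One residual point: the theorem implicitly asserts that well-rounded classes modulo homotheties are genuine (open) cells of a CW-complex; your sketch establishes the order-reversing bijection but not that these classes are bounded polytopes. That part is still carried by the citation to \cite{MR747876}, or by the Ryshkov-polyhedron remark following the theorem in the paper.
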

\begin{proof} If $\Lambda$ is a maximal order, 
this is the main theorem of \cite{MR747876}. The general case follows easily, since $\Lambda^{\times}$ is a finite index subgroup of the unit group of any of its maximal overorders.
 See also P. Gunnells' appendix to the book \cite{MR2289048} for a nice explanation of the duality between the Vorono\"{\i} complex and the well-rounded retract, together with their cell decompositions.
\end{proof}

\begin{rem}
Scaling invariance allows to work within the set $\widetilde{\mathcal P}=\RR _{>0}\backslash \mathcal P$, which we can identify with the set $\left\lbrace F \in \mathcal P \mid \min_L(F)=1\right\rbrace$. Using a classical terminology, $\widetilde{\mathcal P}$ can thus be viewed as the boundary of a \emph{Ryshkov polyhedron} (\cite{MR0276873}), which is locally finite (see \cite{MR2537111,MR2466406}). In particular, $\widetilde{\mathcal P}$ is a piecewise linear hypersurface, whose faces are the minimal classes. The bounded faces correspond to well-rounded classes and actually are polytopes.
\end{rem}

\section{Bass-Serre theory}\label{BassSerre}
In this section, we describe the theory underlying our algorithm for computing $\Gamma=\Lambda^{\times}$. Almost all the material here is borrowed, with hardly any change, from Brown's paper \cite{MR739633}. 

Let $\widetilde{W}$ be equipped with its cell structure, as in the previous section. We denote by $\widetilde{W}_i$ its $i$-skeleton. We can see $\mathcal G \coloneqq \widetilde{W}_1$ as a \emph{graph} in the sense of  \cite{MR0476875}, with vertex set $\mathcal V \coloneqq \widetilde{W}_0$
 and edge set $\mathcal E$ consisting of $1$-cells together with an orientation. 
Each edge $e$ has an origin $o(e)$ and a terminus $t(e)$, corresponding to its orientation. For each such $e$, we define $\overline{e}$ as the same $1$-cell, together with the reversed orientation ($o(\overline{e})=t(e)$ and $t(\overline{e})=o(e)$).

If there exists $g \in \Gamma$ such that $g(e)=\overline{e}$, one says that the edge $e$ is inverted under the action of $\Gamma$. One technical difficulty when applying Bass-Serre theory in its original form is precisely that the definition of a graph adopted either in \cite{MR0476875} or in \cite{MR1239551} forbids action of groups reversing the orientation of edges, a condition which is not necessarily satisfied in practice. One can easily get around this problem e.g. using barycentric subdivision. Brown's paper deals with this in a slightly different way, although essentially equivalently, which we summarize in the following steps:
 
\begin{enumerate}
\item  Choose an orientation on $1$-cells, in such a way that the orientation of those that are not inverted by the action of $\Gamma$ is preserved by this action. 
\item Split the set of edges $\mathcal E$  into a disjoint union $\mathcal E= \mathcal E^{+}\sqcup \mathcal E^{-}$, where $\mathcal E^{+}$
 denotes the set of edges  which are not inverted  under the action of $\Gamma$,
 and $\mathcal E^{-}$ its complement. 
For $e \in \mathcal E$ let $\Gamma_{\left\lbrace e,\overline{e}\right\rbrace}$ be the stabilizer
 of the set $\left\lbrace e,\overline{e}\right\rbrace$ and $\Gamma_e$ the stabilizer of $e$ 
(together with its orientation). 
Clearly, $\Gamma_e$ is a subgroup of $\Gamma_{\left\lbrace e,\overline{e}\right\rbrace}$, one has 
$\Gamma _e = \Gamma _{o(e)} \cap \Gamma _{t(e)} $ and
$$\left(\Gamma_{\left\lbrace e,\overline{e}\right\rbrace} : \Gamma_e \right) = \begin{cases}
1 \text{ if } e \in \mathcal E^{+}\\
2 \text{ if } e \in \mathcal E^{-}.
\end{cases} $$
\item Fix a tree $T$ of representatives of $\widetilde{W}_1 ~ \mathrm{mod} ~ \Gamma$, that is a sub-tree such that the set $V_T$ of its vertices is a set of representatives of  $\widetilde{W}_0 ~ \mathrm{mod} ~ \Gamma$, with the further assumption that all its edges are in $\mathcal E^{+}$. This implies in particular that its edges are pairwise inequivalent $\mathrm{mod} ~ \Gamma$.
\item Choose a set $E^{+}$ of representatives of $\mathcal E^{+} ~ \mathrm{mod} ~ \Gamma$ such that $o(e) \in V_T$ for all $e \in E^{+}$, and a set $E^{-}$ of representatives of $\mathcal E^{-} ~\mathrm{mod} ~ \Gamma$ such that $o(e) \in V_T$ for all $e \in E^{-}$.
\item \label{cinq} For every $e\in E^{+}$, choose $g_e \in \Gamma$ such that $g_e^{-1}(t(e)) \in V_T$, with the convention that $g_e=1$ whenever $e$ is an edge of $T$.
\item \label{cinqm} For every $e\in E^{-}$, choose 
$g_e \in \Gamma_{\{ e,\overline{e} \}} \setminus \Gamma _e$. 
\item  Choose a set $F$ of representatives of the $2$-cells of $\widetilde{W} ~ \mathrm{mod} ~ \Gamma$, and attach to every $2$-cell $\tau$ in $F$ a \emph{combinatorial path} $\alpha$, \ie a sequence $\left(e_1, e_2, \dots, e_m\right)$ of edges such that:

\begin{itemize}
\item $\partial \tau =\cup_i \sigma_i$, where $\sigma_i$ denotes the $1$-cell underlying $e_i$,
\item $v_0 \coloneqq o(e_1)$ is in $V_T$,

\item $t(e_i)=o(e_{i+1})$ for $1\leq i \leq m-1$ and $t(e_m)=o(e_1)$,
\item $e_{i+1} \neq \overline{e_i}$ for  $1\leq i \leq m-1$ and $e_{1} \neq \overline{e_m}$.
\end{itemize}
To each edge of this path, one attaches (non-canonically) an element $g_i$ of the subgroup generated by the various isotropy groups $\Gamma_v$ ($v \in V_T$), $\Gamma_{\left\lbrace e,\overline{e}\right\rbrace}$ ($e \in E$) and the $g_e$ ($e \in E^{+}$) chosen in step \ref{cinq}, such that the successive vertices belong to $V_T$, $g_1V_T$, $g_1g_2V_T$, \dots , $g_1g_2 \cdots g_m V_T$ (see \cite[Section 1]{MR739633} for a precise description of $g_i$). In particular, one has $g_1g_2 \cdots g_m \in \Gamma_{v_0}$. We call the sequence $\left(g_1, \cdots, g_m\right)$ the \emph{cycle associated to $\tau$}, and occasionally identify a cycle with the corresponding cell, when no confusion can ensue.
\end{enumerate}

Altogether, the previous data lead to the following presentation of $\Gamma=\Lambda^{\times}$:
\begin{theorem}[\cite{MR739633} Theorem 1] \label{maintheorem}
Let $\Gamma=\Lambda^{\times}$ be the unit group of an order in a finitely generated simple algebra over $\QQ$. Let $W$ be the set of well-rounded forms in $\mathcal P$, and $T$, $E=E^{+}\cup E^-$, $F$ be chosen as above. Then $\Gamma$ has the following structure:

\begin{equation}\label{amalg} 
\Gamma=\left(\Ast\limits_{v \in V_T}\Gamma_v\right)\ast \left(\Ast_{e \in E^-}\Gamma_{_{\left\lbrace e,\overline{e}\right\rbrace}}\right)\ast F(E^{+}) \slash R
\end{equation} where $\ast$ stands for the free product, $F(E^{+})$ denotes the free product on the set $\left\lbrace g_e, e \in E^+\right\rbrace$ and $R$ is the normal subgroup generated by:
\begin{itemize}
\item $g_e$, $e \in T$,
\item $g_e^{-1} \cdot g \cdot g_e \left(g_e^{-1} g  g_e \right)^{-1}$, $e \in E^{+}$, $g \in \Gamma_e \subset \Gamma _{o(e)}$,
\item\label{cycles}  $g_1 \cdot g_2\cdot \dots \cdot g_{m-1}\cdot g_m\left(g_1 \cdots g_m\right)^{-1}$, $\left(g_1, \cdots, g_m\right) \in F$
\end{itemize}

%\begin{enumerate}
%\item The group $\Gamma=\Lambda^{\times}$
%\end{enumerate}
In other words, $\Gamma$ is generated by the subgroups
 $\Gamma_v$  ($v \in V_T$) and the elements $g_e$ ($e \in E^{+}\cup E^-$),
 subject to the following relations:
\begin{itemize}
\item[0.] The multiplication table of $\Gamma _v$ ($v\in V_T$). 
\item[1.] $g_e=1$ if $e$ is an edge of $T$.
\item[2.] $g_e^{-1} \cdot g \cdot g_e = g_e^{-1} g  g_e \in \Gamma _{w(e)}$,
 for $e \in E^{+}$ and $g \in \Gamma_e \subset \Gamma _{o(e)}$.
%\item[3.] $g_e\cdot g \cdot g_e  = g_egg_e \in \Gamma _{o(e)}$,
% for $e \in E^{-}$ and $g \in \Gamma_e \subset \Gamma _{o(e)}$.
 \item[3.] $g_e\cdot g = g_eg$ and $g \cdot g_e=gg_e \in \Gamma _{o(e)}$,
 for $e \in E^{-}$ and $g \in \Gamma_e \subset \Gamma _{o(e)}$.
\item[4.] $g_1 \cdot g_2$ \dots $g_{m-1}\cdot g_m=g_1 \cdots g_m$ for any cycle $\left(g_1, \cdots, g_m\right)$ associated to a $2$-cell $\tau$.
\end{itemize}
\end{theorem}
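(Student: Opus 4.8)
The statement is the specialization to $\Gamma=\Lambda^{\times}$ of Brown's general presentation theorem \cite[Theorem 1]{MR739633} for a group acting cellularly on a simply connected $CW$-complex. Accordingly, the plan is not to reprove Brown's result but to check that its hypotheses are met by the action of $\Gamma$ on $\widetilde{W}$, and then to observe that the two displays in the statement are exactly Brown's output once the combinatorial data of steps 1--7 have been fixed. Concretely, I would proceed in three stages: verify the nature of the action, verify the topology of $\widetilde{W}$, and finally transcribe Brown's conclusion.

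For the action, everything needed is already at hand. By the $CW$-complex theorem \cite{MR747876} recalled above, $\Gamma$ acts cellularly on $\widetilde{W}$ with finite cell stabilizers, and by the finiteness clause of Theorem \ref{vt} there are only finitely many $\Lambda^{\times}$-orbits of cells in each dimension. This guarantees that the representative sets $V_T$, $E=E^{+}\sqcup E^{-}$ and $F$ chosen in steps 3--7 are finite, so the resulting presentation is finite. A point worth emphasizing is that Brown's framework is designed to allow edges that are inverted by the action: this is precisely the role of the splitting $\mathcal E=\mathcal E^{+}\sqcup \mathcal E^{-}$ and of the groups $\Gamma_{\{e,\overline e\}}$. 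No barycentric subdivision is required, which is why we use Brown's formulation rather than classical Bass--Serre theory.

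The crux of the argument, and the only genuinely nontrivial input, is that $\widetilde{W}$ is simply connected. I would deduce this from contractibility. The cone $\mathcal P=\Sym(A_{\RR})_{>0}$ is open and convex, hence contractible, and so is its quotient $\widetilde{\mathcal P}=\RR_{>0}\backslash\mathcal P$ by the positive homotheties. The well-rounded set $W$ is Ash's well-rounded retract: by \cite{MR747876} the inclusion $\widetilde{W}\hookrightarrow\widetilde{\mathcal P}$ is a $\Gamma$-equivariant deformation retraction. Consequently $\widetilde{W}$ is contractible, in particular simply connected. This is exactly the hypothesis Brown's theorem requires in order that the relations coming from vertex stabilizers, edge stabilizers and $2$-cells \emph{present} $\Gamma$, and not merely generate it; I expect the main obstacle to be conceptual here rather than computational, since without simple connectivity one only obtains a generating set.

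With these two verifications in place, Brown's theorem applies verbatim. It yields that $\Gamma$ is generated by the vertex stabilizers $\Gamma_v$ ($v\in V_T$) together with the elements $g_e$ ($e\in E^{+}\cup E^{-}$), subject to relations 0--4; equivalently, after collecting the free and amalgamated factors, it yields the presentation (\ref{amalg}) with $R$ the stated normal subgroup. The remaining work is a matter of matching notation: checking that the index condition $(\Gamma_{\{e,\overline e\}}:\Gamma_e)\in\{1,2\}$ together with the elements $g_e$ of steps 5--6 reproduce Brown's edge data, and that the cycles $(g_1,\dots,g_m)$ attached to the $2$-cells in step 7 reproduce his $2$-cell relations. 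Once simple connectivity is granted, this translation is direct.
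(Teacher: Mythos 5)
Your proposal is correct and takes essentially the same route as the paper: both reduce the statement to Brown's theorem \cite[Theorem 1$'$]{MR739633} applied to the complex $\widetilde{W}$, the key input being that $\widetilde{W}$ is contractible (hence simply connected) via Ash's well-rounded retract \cite{MR747876}, after which the presentation is a direct transcription of Brown's conclusion. The only difference is one of emphasis: you verify the hypotheses (cellular action, finite stabilizers, the deformation retraction) in more detail, whereas the paper instead sketches the internal structure of Brown's argument, namely that the first two families of relators present the fundamental group of the quotient graph of groups on the barycentric subdivision of $\widetilde{W}_1$ and the cycle relations then kill its fundamental group.
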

\begin{proof} The description of $\Gamma$ in the first part of the theorem is \cite[theorem 1']{MR739633}, applied to $\widetilde{W}$ (which is contractible, hence simply connected). More precisely, the free product $$\left(\Ast\limits_{v \in V_T}\Gamma_v\right)\ast \left(\Ast_{e \in E^-}\Gamma_{_{\left\lbrace e,\overline{e}\right\rbrace}}\right)\ast F(E^{+})$$ modulo the normal subgroup generated by $$g_e, e \in T$$ and $$g_e^{-1} \cdot g \cdot g_e \left(g_e^{-1} g  g_e \right)^{-1}, e \in E^{+}, g \in \Gamma_e$$ is precisely the fundamental group of the barycentric subdivision $\mathcal G '$ of $\widetilde{W}_1$ acted on by $\Gamma$ (see the discussion preceding \cite[theorem 1']{MR739633}). Finally, one has to mod out by the fundamental group of $\mathcal G '$, which is the normal closure of the cycles associated to the $2$-cells in $F$. The second part of the theorem is just a rephrasing in terms of generators and relations.
\end{proof}

\begin{rem}\label{sidetransformation}
Let $\tilde{E}: = 
\{ e\in {\mathcal E} : o(e)\in V_T , t(e) \not\in V_T\} $  be the set of 
edges coming out of $T$. 
For any $e\in {\tilde{E}}$ 
there is some element $f\in E$ with $o(f) = o(e)$. Choose  some $h\in \Gamma _{o(e)}$ 
such that $h(f) = e$ (with the convention that $h=1$ if $e=f$). 
The element $g_e = h g_f \in \Gamma $ then satisfies $g_e^{-1}(t(e)) \in V_T$.
and is called the
{\bf side-transformation} corresponding to $e$. 
\end{rem}

\begin{rem}
In practical applications one usually does not fix an orientation 
on the edges of the graph $X$. 
The additional relations that we then need 
correspond to the so called ``side-pairings'' from the Poincar\'e-algorithm.
Let $e\in E$ and $g_e$ be as above, so that $g_e^{-1}(t(e)) = o(f) \in V_T$. 
We then call the edges $e$ and $g_e^{-1}(e) = \overline{f}$ {\em paired}.
Then $g_e (t(f)) = o(e) \in V_T$ because $\Gamma $ acts on the graph and
hence preserves edges. 
Also $g_f^{-1}(t(f)) \in V_T $ so these two vertices are in $V_T$ and
equivalent under $\Gamma $, which implies that 
$g_f^{-1}(t(f)) = o(e) \in V_T $ and $g_e^{-1} g_f \in \Gamma _v$. 
So if we do not choose an orientation and so do not 
restrict to those transformations $g_e$ with $o(e) \in V_T$ 
we need to add these additional relations. 
\end{rem}

\section{Solving the word problem} 

To solve the word problem we return to the tessellation by
Vorono\"{\i} domains $D_F$ of the perfect forms. 
According to Corollary \ref{tesse}  
this yields a locally finite 
exact tessellation of the Vorono\"{\i} polyhedron $\Omega$
that contains the open cone ${\mathcal P}$. 
Recall that ${\mathcal P}$ is a cone in the
Euclidean space $\Sigma $ from Definition \ref{defSigma}. 
Instead of working in projective space we take an
affine section of ${\mathcal P}$,
$${\mathcal P}' := \{ F\in {\mathcal P} \mid \tr(F) = 1 \} .$$

\begin{lemma} \label{geodesic}
For $x,y\in {\mathcal P}'$ let ${\mathcal G}:=\{ x+ s (x-y) \mid 
s\in [0,1] \}  \subset {\mathcal P}'$ the Euclidean geodesic
joining $x$ and $y$. 
Then  the set of all Vorono\"{\i} domains of perfect forms that 
meet ${\mathcal G}$,
$${\mathcal V}(x,y):= \{ D_F \mid D_F \cap {\mathcal G} \neq \emptyset \}, $$ 
is finite.
We call $d(x,y):=|{\mathcal V}(x,y)|$ the perfect-distance between $x$ and $y$. 
Note that this distance is $\Gamma $-invariant,
$d(x,y) = d(g(x),g(y)) $ for all $g\in \Gamma $.
\end{lemma}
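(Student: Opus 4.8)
The plan is to read off finiteness from the local finiteness of the Vorono\"{\i} tessellation (Theorem \ref{vt}) combined with a compactness argument, and to read off $\Gamma$-invariance from the fact that $\Gamma$ acts on $\Sigma$ by invertible linear maps that permute the Vorono\"{\i} domains.

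\emph{Finiteness.} First I would check that $\mathcal G$ is a compact subset of the \emph{open} cone $\mathcal P$. Indeed $\mathcal P=\Sym(A_{\RR})_{>0}$ is convex, positive definiteness being preserved by convex combinations, so the segment joining $x,y\in\mathcal P'\subset\mathcal P$ stays in $\mathcal P$; since the reduced trace is affine and equals $1$ at both endpoints, this segment in fact lies in $\mathcal P'$, and being a continuous image of $[0,1]$ it is compact. By Theorem \ref{vt} the family $\{D_F\}_{F\text{ perfect}}$ is a locally finite tessellation covering $\mathcal P$ (Corollary \ref{tesse}): every point of $\mathcal P$ has an open neighbourhood meeting only finitely many $D_F$. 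These neighbourhoods cover the compact set $\mathcal G$, so finitely many of them, say $U_1,\dots,U_k$, already do; any $D_F$ with $D_F\cap\mathcal G\neq\emptyset$ meets one of the $U_i$ and is therefore among the finitely many domains meeting $U_1,\dots,U_k$. Hence $\mathcal V(x,y)$ is finite (and nonempty, by the covering property), so $d(x,y)$ is a well-defined positive integer.

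\emph{Invariance.} Each $g\in\Gamma\subset\GL(L)$ acts on $L\subset V$ by $v\mapsto gv$, hence on $\Sigma$ by the invertible $\RR$-linear map $T_g\colon M\mapsto gMg^{\dagger}$, so that $T_g(xx^{\dagger})=(gx)(gx)^{\dagger}$. Because $gL=L$, the map $T_g$ sends the $L$-minimal vectors of a perfect form $F$ to the $L$-minimal vectors of another perfect form $F'$, so that $T_g(D_F)=D_{F'}$; in particular $T_g$ preserves $\mathcal P$ and permutes the set $\{D_F\}$. Being linear, $T_g$ carries the segment $\mathcal G$ onto the segment joining $T_g(x)$ and $T_g(y)$; rescaling to trace $1$ (which changes neither the domains met, the $D_F$ being cones, nor the endpoints projectively) identifies this with the geodesic joining $g(x)$ and $g(y)$. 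Thus $D_F\mapsto T_g(D_F)$ restricts to a bijection $\mathcal V(x,y)\to\mathcal V(g(x),g(y))$, whence $d(x,y)=d(g(x),g(y))$.

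\emph{Main obstacle.} The finiteness is a direct application of local finiteness, and the only hypothesis that needs care is that $\mathcal G$ avoids $\partial\mathcal P$ (where local finiteness may fail); this is exactly what convexity of $\mathcal P$ guarantees, so no real difficulty remains. The genuinely delicate point is in the invariance: one must use the transformation attached to the action of $g$ on the lattice $L$, namely $T_g(M)=gMg^{\dagger}$, rather than $M\mapsto g^{\dagger}Mg$. Since the involution $^{\dagger}$ need not preserve $L$, the latter would carry $L$-Vorono\"{\i} domains to $g^{\dagger}L$-domains and would only compare the $L$-distance with a $g^{\dagger}L$-distance; it is the contragredient realisation $T_g$ that permutes the $L$-domains and thereby makes $d$ invariant. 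Getting this compatibility right is the one place where the non-self-dual nature of the setting must be respected.
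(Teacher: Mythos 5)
Your finiteness argument is essentially identical to the paper's proof: compactness of the segment $\mathcal G$ plus local finiteness of the Vorono\"{\i} tessellation, extracting a finite subcover of neighbourhoods each meeting only finitely many domains. The paper merely asserts the $\Gamma$-invariance, and your verification of it is correct — in particular your observation that the map permuting the $L$-Vorono\"{\i} domains is $M\mapsto gMg^{\dagger}$ (sending $xx^{\dagger}$ to $(gx)(gx)^{\dagger}$ with $gL=L$), the contragredient of the action (\ref{act}) on forms, is exactly the right point to make, since $M\mapsto g^{\dagger}Mg$ would not preserve the set of $L$-domains.
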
 

\begin{proof}
This is a general compactness argument: 
For any $z\in {\mathcal G}$ we may choose an open neighborhood $U_z$ of 
$z$ which intersects only finitely many Vorono\"{\i} domains $D_F$,
because of the local finiteness of the tessellation. 
As ${\mathcal G} \subset \cup _{z\in {\mathcal G}} U_z$ 
is an open covering of the compact set  ${\mathcal G}$ there 
is a finite subset $Z\subseteq {\mathcal G}$ such that 
$${\mathcal G} \subset \bigcup _{z\in Z} U_z \subset \bigcup _{z\in Z} 
\bigcup _{D_F\cap U_z \neq \emptyset} D_F . $$
\end{proof}

\begin{theorem}
There is an algorithm to express a given $g\in \Lambda ^{\times } $
as a word in the generators given in Theorem \ref{maintheorem}.
\end{theorem}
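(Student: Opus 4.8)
The plan is to solve the word problem geometrically, by tracking the action of $g$ on the tessellation of $\mathcal{P}$ by Vorono\"{\i} domains and reading off a word in the generators of Theorem \ref{maintheorem} from the combinatorial path that $g$ traces out. Fix once and for all a base perfect form $F_0$ of $L$-minimum $1$ whose minimal class is the base vertex $v_0 \in V_T$, and normalise it to a point $x_0 \in \mathcal{P}'$. Given $g \in \Gamma = \Lambda^{\times}$, use the action (\ref{act}) to form the translated base point $g\cdot x_0 \in \mathcal{P}'$, which lies in the interior of the Vorono\"{\i} domain $D_{g\cdot F_0}$, and then the Euclidean geodesic $\mathcal{G}$ joining $x_0$ and $g\cdot x_0$ as in Lemma \ref{geodesic}. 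This records a finite ordered list of Vorono\"{\i} domains $D_{P_0}, D_{P_1}, \dots, D_{P_d}$ met by $\mathcal{G}$, with $P_0 = F_0$, $P_d = g\cdot F_0$ and $d = d(x_0, g\cdot x_0)$. First I would arrange, by a generic small perturbation of the two endpoints inside the open extreme cells $D_{F_0}$ and $D_{g\cdot F_0}$ (which leaves those cells unchanged), that $\mathcal{G}$ avoids all faces of codimension $\geq 2$; then consecutive domains $D_{P_{i-1}}$, $D_{P_i}$ share exactly a facet. By the duality recorded in Section \ref{wr} between minimal classes and Vorono\"{\i} domains, each such facet crossing is the traversal of an edge of $\mathcal{G} = \widetilde{W}_1$ joining $P_{i-1}$ to $P_i$, and the neighbour $P_i$ across the relevant facet is produced from $P_{i-1}$ by the direction-and-neighbour construction of Lemma \ref{deadend}.

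The heart of the algorithm is then the \emph{folding} of this edge path back onto the tree $T$, which is where the generators are produced. Inductively I would maintain an element $h_i \in \Gamma$, built up as a partial word in the generators, with the property that $h_i^{-1}\cdot P_i \in V_T$, starting from $h_0 = 1$. At the $i$-th step the edge $e_i$ joining $P_{i-1}$ to $P_i$ is carried by $h_{i-1}^{-1}$ to an edge whose origin $h_{i-1}^{-1}\cdot P_{i-1}$ lies in $V_T$. Using isometry testing (Plesken--Souvignier), I would match this edge, via an element $s_i$ of the finite vertex stabiliser $\Gamma_{o}$ with $o = h_{i-1}^{-1}\cdot P_{i-1}$, to one of the chosen representatives $f \in E^{+}\cup E^{-}$, and apply the associated side-transformation $g_f$ (Remark \ref{sidetransformation}) to carry the terminus back into $V_T$. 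Setting $h_i = h_{i-1}\cdot s_i \cdot g_f$ preserves $h_i^{-1}\cdot P_i \in V_T$ and appends the generators $s_i \in \Gamma_{o}$ and $g_f$ to the word.

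After $d$ steps one has $h_d^{-1}\cdot P_d = h_d^{-1} g\cdot F_0 \in V_T$; since $P_d = g\cdot F_0$ is $\Gamma$-equivalent to $v_0$ and $V_T$ meets each orbit in a unique representative, this forces $h_d^{-1} g\cdot v_0 = v_0$, i.e. $\delta := h_d^{-1} g \in \Stab_{\Lambda^{\times}}(v_0) = \Gamma_{v_0}$. As $\Gamma_{v_0}$ is finite (its finiteness is established in Section \ref{wr}) and is one of the generating subgroups, $\delta$ is expressible directly as a word in $\Gamma_{v_0}$, whence $g = h_d\cdot \delta$ is a word in the generators of Theorem \ref{maintheorem}. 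Termination is immediate because $d = d(x_0, g\cdot x_0) < +\infty$ by Lemma \ref{geodesic}, so the loop runs for finitely many steps.

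The main obstacle I expect is the robust computation of the combinatorial crossing sequence of $\mathcal{G}$: one must reliably determine, cell by cell, through which facet the geodesic exits and which perfect form is its neighbour, relying throughout on the direction/neighbour machinery of Lemma \ref{deadend} together with an isometry test at each step to identify the $V_T$-representative and the side-transformation $g_f$ realising the fold. The genuinely delicate point is the handling of non-generic geodesics that would otherwise pass through faces of codimension $\geq 2$; this must be dealt with either by the endpoint perturbation described above (staying inside the open extreme cells so that the extremities $P_0$, $P_d$, and hence the element $g$ being expressed, are unaffected) or by a consistent tie-breaking rule, and it is this transversality argument that requires the most care. Everything else reduces to the finite stabiliser computations and lattice isometry testing already available in the Vorono\"{\i} framework.
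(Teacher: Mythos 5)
Your proposal is correct and takes essentially the same approach as the paper's proof: both follow the Euclidean geodesic in $\mathcal{P}'$ from a base point in a tree-vertex domain to its image under $g$, convert the resulting sequence of facet crossings into side-transformations in the sense of Remark \ref{sidetransformation} (with finiteness of the crossing sequence supplied by Lemma \ref{geodesic}), terminate with an element of a finite vertex stabilizer, and dispose of non-generic geodesics meeting codimension $\geq 2$ faces by a small perturbation of the base point. The only differences are organizational: you precompute the whole crossing sequence and fold it in one pass while maintaining the partial word $h_i$ (factoring each fold explicitly as $s_i g_f$), whereas the paper iteratively replaces $g$ by $g_e^{-1}g$ and justifies termination via the strictly decreasing, $\Gamma$-invariant perfect-distance $d(x,g(x))$ --- equivalent bookkeeping for the same algorithm.
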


\begin{proof}
For the proof we give an algorithm to find such a word in 
the set of all side-transformations as defined in 
Remark \ref{sidetransformation} followed by some element in $\cup _{v\in V_T} \Gamma _v $.

Let 
$$F_T := \bigcup _{v\in V_T} D_v \cap {\mathcal P}' $$
be the union of all Vorono\"{\i} domains of the perfect forms
corresponding to the vertices of the tree $T$ chosen in Section \ref{BassSerre}.
Choose some inner point $x \in D_v \subset F_T$ and let $y:=g(x)$ be its
image under $g$ and ${\mathcal G}$ be the geodesic between $x$ and $y$. 
If $y\in D_w$ for some $w\in V_T$ then $w = v$ and 
$g\in \Gamma _v$. 
Otherwise
 ${\mathcal G}$ meets the boundary of $F_T$ in some point $p \in {\mathcal P}'$.
Let $$M := \{ D_F : F\in {\mathcal V} \setminus V_T , p \in D_F \}.$$
By Lemma \ref{geodesic} the set
 $M$ is finite. If $p$ is in the relative interior of some codimension 1
facet
(which will be almost always the case) 
then $M = \{ t(e) \} $ for some $e\in {\tilde{E}}$. 
Note that the perfect-distance $d(z,g(x))$ 
between any inner point $z$ of $D_{t(e)}$ which lies on $\mathcal{G}$
and $y$ is strictly smaller than $d(x,g(x))$.
Let $g_e$ be the corresponding
side-transformation  defined in Remark \ref{sidetransformation}
Then $g_e^{-1} (t(e)) = w\in V_T$ and $g_e^{-1}(z) \in D_w$ with
$$d(g_e^{-1} (z), g_e^{-1} (g(x)) ) = d(z,g(x)) < d(x, g(x) )$$
Replace $g$ by $g_e^{-1} g$, $x$ by $g_e^{-1} (z)$ and 
$y$ by $g_e^{-1}(y)$ and continue. 
%In the unlikely case that $M$ contains more than one element, 
%we walk along some path in the graph $X$ whose vertices lie in $M$
%until we come to the intersection of the boundary of 
%$\bigcup _{F \in M} D_F$ with ${\mathcal G}$. 

%In the unlikely case that $M$ contains more than one element there is a sequence $m_0$, $m_1$,...,$m_r$ of finite length such that $m_1,...,m_r$ are distinct elements of $M$, $m_0 \in V_T$, $\{m_i,m_{i+1}\}$ is an edge of $\widetilde{W_1}$ for $0 \leq i <r$, and $\mathcal{G}$ meets the interior of $D_{m_r}$. %Let $g_e$ be the side-transformation (see above) for the edge $e=\{M_0,M_1\}$ and procede by replacing $g$ by $g_e^{-1} g$, $x$ by $g_e^{-1} (x)$, $p$ by $g_e^{-1}(p)$, $y$ by $g_e^{-1}(y)$, and $m_0,m_1,...,m_r$ by $g_e^{-1}m_1,...,g_e^{-1}m_r$.

%Let $g_{1},...,g_{r-1}$ be the side-transformations (see above) corresponding to the edges $$\{m_0,m_1\},\{g_1^{-1}m_1,g_1^{-1}m_2\},...,\{g_{r-1}^{-1}\cdot...\cdot g_1^{-1}m_{r-1},g_{r-1}^{-1}\cdot...\cdot g_1^{-1}m_r\}.$$
%Proceed by replacing  $g$ by $g_{r-1}^{-1}\cdot...\cdot g_1^{-1} g$, $y$ by $g_{r-1}^{-1}\cdot...\cdot g_1^{-1} y$, and $x$ by $g_{r-1}^{-1}\cdot...\cdot g_1^{-1}z$ for some inner point $z$ of $D_{m_r}$ which lies on the geodesic. Then $$d(g_{r-1}^{-1}\cdot...\cdot g_1^{-1}z,g_{r-1}^{-1}\cdot...\cdot g_1^{-1}y)=d(z,y)<d(x,y)$$ as above.
In the unlikely event that $M$ contains more than one element (i.e. $\mathcal{G}$ meets the intersection of at least two distinct facets containing $p$) one chooses a different starting point $x'$ in a small neighborhood of $x$ which is still contained in $D_v$ while keeping the endpoint $y$ of $\mathcal{G}$ fixed. Since the intersection of two distinct facets is of codimension at least $2$ it is (from a measure theoretic point of view) highly unlikely that $\mathcal{G}$ again meets more than one facet of the fundamental domain. Hence after a small number of modifications of $x$ and $\mathcal{G}$ we are in the situation described above.
%If $p$ is not in the relative interior of some facet then 
%choose a different starting point $x'$ in a small neighborhood
%of $x$.
\end{proof}

\section{Examples} 

In this section we list a few examples to illustrate the algorithm.
Many more examples can be found in a database for unit groups of orders 
linked to the authors' homepages, where one will also find Magma implementations 
of the algorithms. 

\subsection{The rational quaternion algebra ramified at 2 and 3}
To illustrate the theory of the previous sections we comment on 
a very easy example. 
Take the rational quaternion algebra ramified at 2 and 3,
$${\mathcal Q}_{2,3} = \left( \frac{2,3}{\Q} \right) = 
 \langle i,j \mid i^2=2, j^2 = 3, ij=-ji \rangle 
= \langle \diag (\sqrt{2},-\sqrt{2}), \left(\begin{array}{cc} 0 & 1 \\ 3 & 0 \end{array} \right) \rangle . $$
Then a maximal order is 
$\Lambda = \langle 1,i,\frac{1}{2} (1+i+ij) , \frac{1}{2} (j+ij) \rangle $.
So here
$V=A={\mathcal Q}_{2,3} $, $A_{\R} = \R^{2\times 2}$, $L=\Lambda $.
If we embed 
 $A$ into $A_{\R }$ using the maximal subfield $\Q[\sqrt{2}]$
we find three perfect forms representing the $\Lambda ^{\times }$-orbits 
on the set of all perfect forms:
$$ 
F_1 = \left( \begin{array}{@{}c@{}c@{}} 
         1 & 2-\sqrt{2}  \\
    2-\sqrt{2} &          1 
\end{array}\right) , ~
F_2 = \left( \begin{array}{@{}c@{}c@{}} 
    6-3\sqrt{2}  &            2\\
               2  &  2+\sqrt{2}
\end{array}\right), ~
F_3 = \diag( -3\sqrt{2} + 9, 3\sqrt{2} + 5) 
 $$
with stabilizers 
$$\Stab _{\Lambda ^{\times }} (F_1) = \langle -1 \rangle,
\Stab _{\Lambda ^{\times }} (F_2) = \langle \beta  \rangle \cong C_4,
\Stab _{\Lambda ^{\times }} (F_3) = \langle \alpha \rangle \cong C_6 . $$

%The resulting tessellation of $\overline{{\mathcal P}}$ by Voronoi domains is given as follows:

The tessellation  for ${\mathcal Q}_{2,3} \hookrightarrow 
\Q [\sqrt{2}]^{2\times 2}$
and the relevant part of the resulting graph (dual to the
tessellation) is visualised in the following pictures.
\\
\includegraphics[scale=0.4]{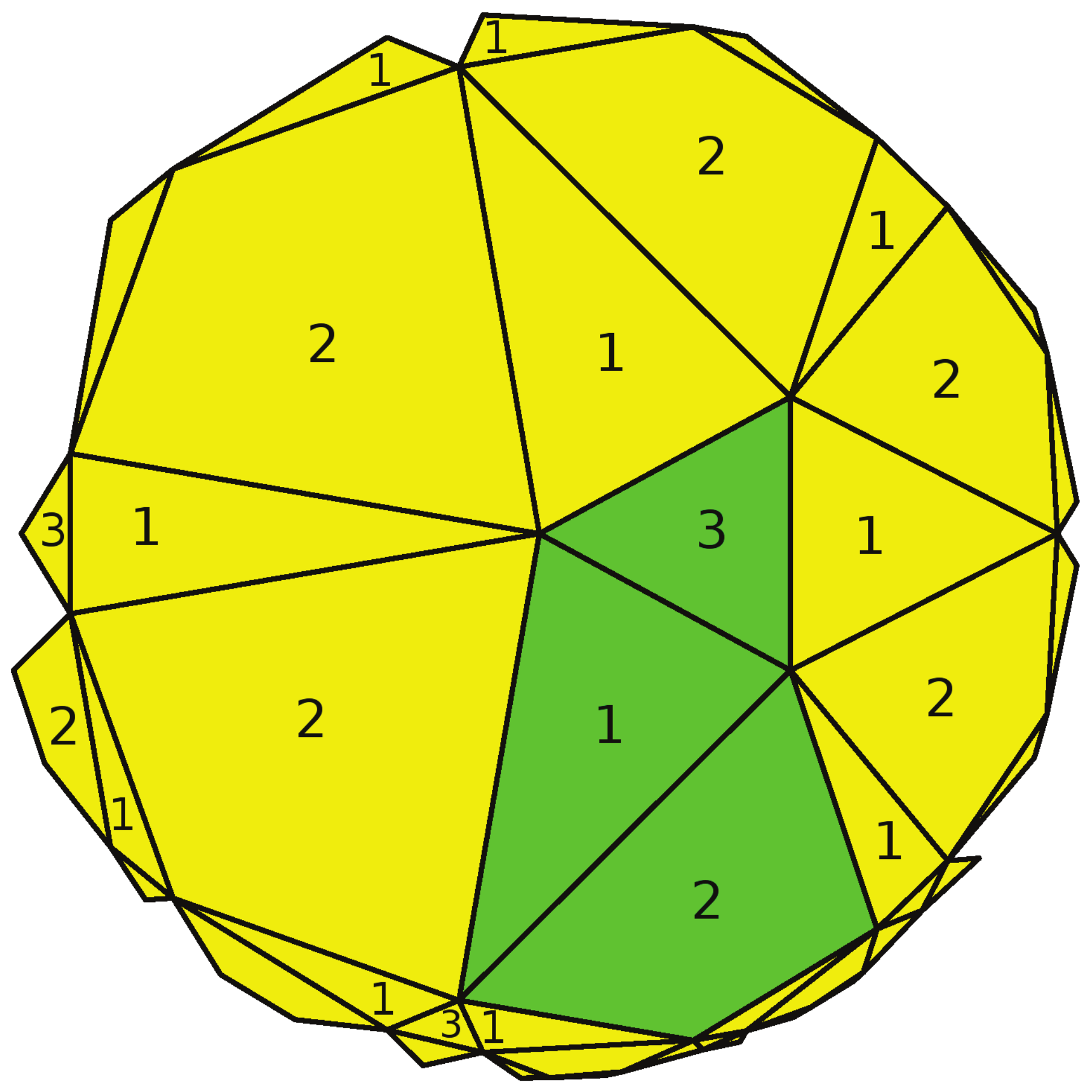}
\hfill{\includegraphics[scale=0.9]{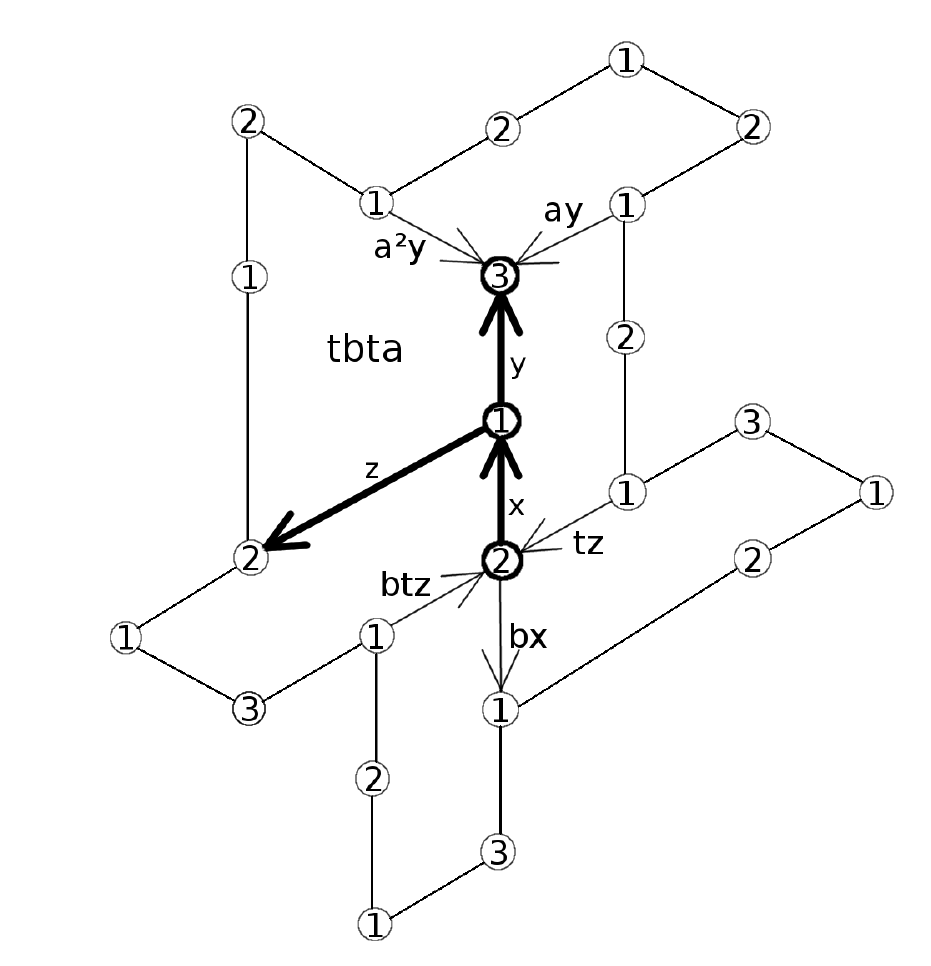}}

So $V=\{1,2,3\}$ is represented by the double circled vertices, 
and $E=E^+ = \{ x,y,z \}$. 
We have $\Gamma _1 = 1$, $\Gamma _2 = \langle b \rangle \cong C_2$ and
$\Gamma _3 = \langle a \rangle \cong C_3$ and put 
$g_z =: t$. 
Then 
$$\Gamma = \Lambda ^{\times }/\langle \pm 1 \rangle = \langle a,b,t \mid a^3, b^2, atbt \rangle .$$
Note that all cycle relations are conjugate as there is just 
one $\Lambda ^{\times }$-orbit on the minimal classes of perfection 
corank 2. 

To illustrate that the resulting tessellation depends on the 
chosen maximal subfield we redo the computations for the
maximal subfield $\Q [\sqrt{3}]$ instead of $\Q [\sqrt{2}]$. 

The tessellation for ${\mathcal Q}_{2,3} \hookrightarrow 
\Q [\sqrt{3}]^{2\times 2}$ is as follows. \\
\includegraphics[scale=0.4]{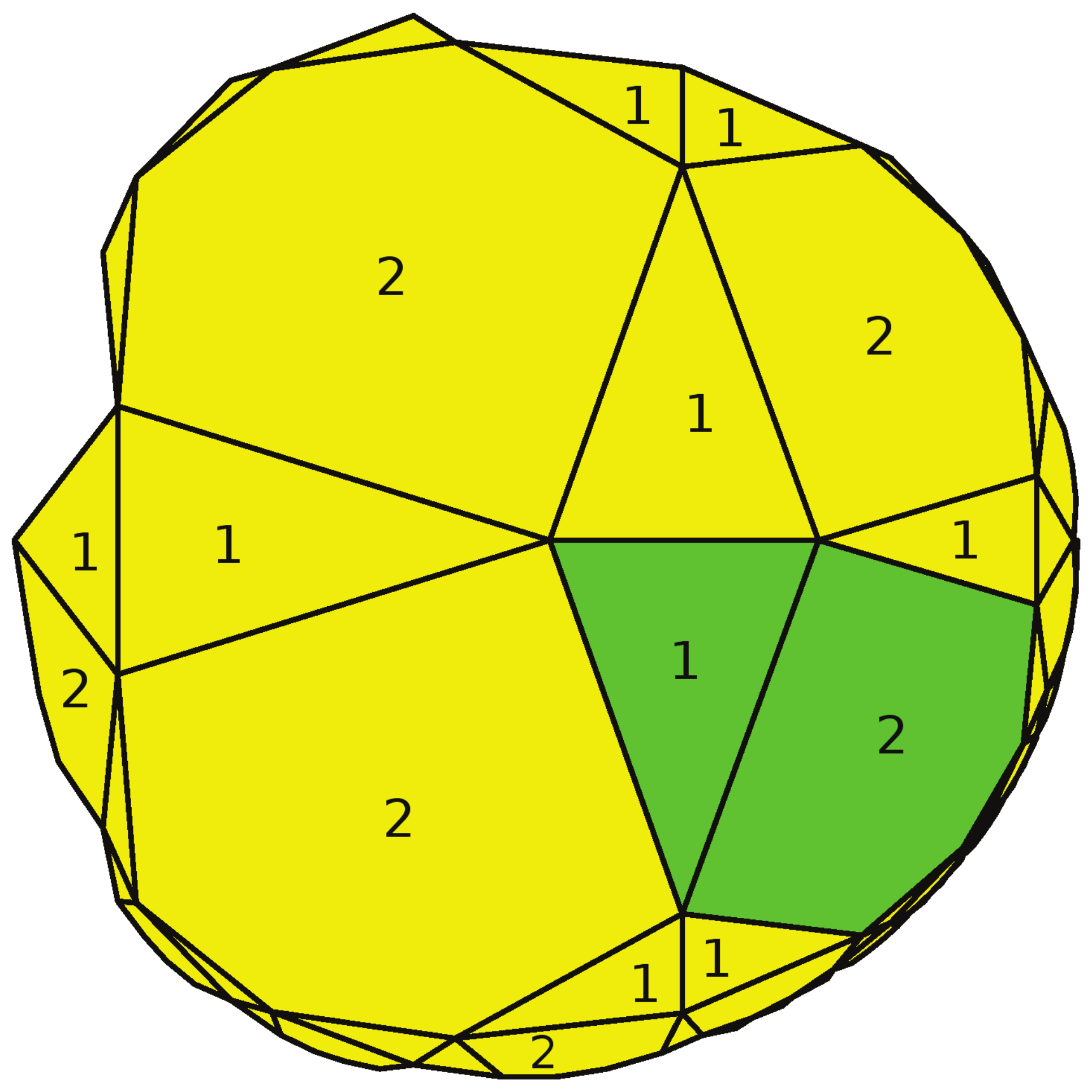}
\hfill{\includegraphics[scale=1.0]{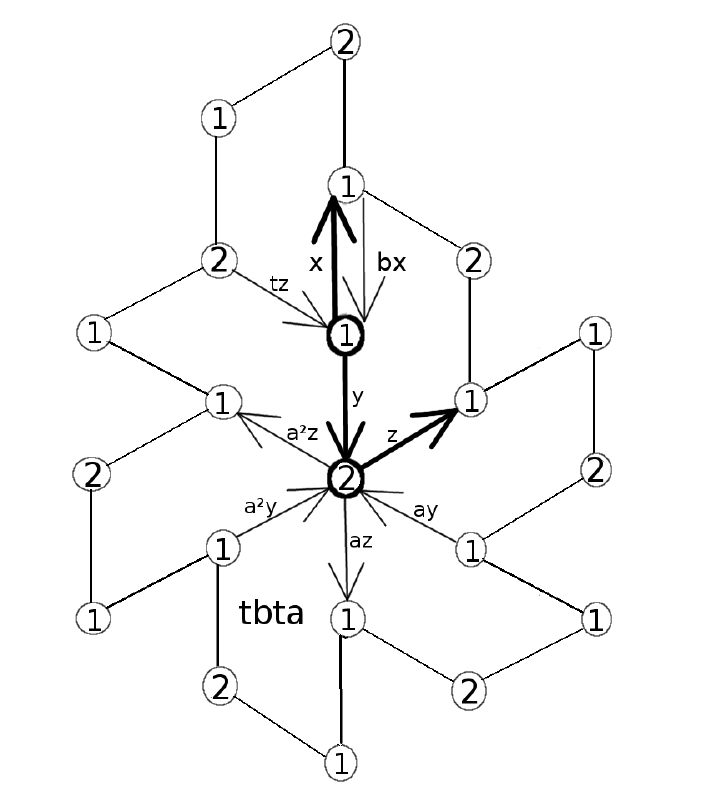}}

So here we obtain only 2 perfect forms, $f_1$ and $f_2$, say,
with $\Stab _{\Lambda ^{\times }}(f_1) = \langle -1 \rangle $, 
$\Stab _{\Lambda ^{\times }} (f_2) = \langle \alpha \rangle \cong C_6$. 

\subsection{The rational quaternion algebra ramified at 19 and 37}

This example illustrates the power of Vorono\"{\i}'s algorithm.
Let $\Lambda $ be a maximal order in the 
quaternion algebra $\left( \frac{19,37}{\Q } \right) $,
the rational quaternion algebra ramified at 19 and 37. 
The Fuchsian Group package in Magma does not return a 
presentation of the unit group $\Lambda ^{\times }$ in a reasonable 
time. Our algorithm takes about 5 minutes to compute 
$\Lambda ^{\times }/\langle \pm 1 \rangle = \langle H_1,\ldots , H_{56} \rangle$ 
with the single relator 
$$
\begin{array}{l} 
H_{21}  H_{10}^{-1}  H_{44}^{-1}  H_{49}^{-1}  H_{14}^{-1} 
 H_{55}^{2}  H_{4}  H_{42} H_{15}^{-1}  H_{46} 
 H_{19}^{-1}  H_{52}^{-1}  H_{20}  H_{17}^{-1}  H_{9} 
 H_{54}  H_{39} H_{16}^{-1}  H_{48}  H_{3}^{-1}  H_{44}^{-1}  H_{38} 
\\
 H_{2}  H_{26}^{-1}  H_{35}^{-1} H_{18}  H_{12}  
H_{56}^{2}  H_{1}  H_{20}  H_{25}  H_{24} 
 H_{23}  H_{5}  H_{50} H_{8}^{-1}  H_{41} 
 H_{35}^{-1}  H_{2}  H_{15}  H_{28}^{-1}  H_{5}  H_{43}  H_{53}^{-1}
\\
    H_{1}^{-1}  H_{34}^{-1}  H_{52}  H_{49}^{-1}  H_{48} 
 H_{8}^{-1}  H_{33}^{-1}  H_{14} H_{3}  H_{27}  
H_{36}^{-1}  H_{40}^{-1}  H_{47}  H_{9}^{-1}  H_{22} 
 H_{13}^{-1} H_{53}^{-1 } H_{39}  H_{27}  H_{51}^{-1}  H_{13}  H_{46} 
\\
 H_{47}^{-1}  H_{43}^{-1} H_{17}^{-1}  H_{37}^{-1}  H_{40}^{-1} 
 H_{21}  H_{30}  H_{6}  H_{12}^{-1}  H_{32}
    H_{54}^{-1}  H_{28}^{-1 } H_{36}  H_{22}  H_{29}^{-1} 
 H_{7}^{-1}  H_{45}^{-1}  H_{26}^{-1} H_{50}^{-1}  H_{32}^{-1}  H_{11}^{-1}
\\
  H_{51}^{-1}  H_{30}  H_{18}  H_{29}  H_{16}^{-1}
     H_{33}  H_{34}  H_{41}  H_{11}  H_{7}  H_{37}^{-1} 
 H_{42}^{-1}  H_{10}^{-1} H_{23}^{-1}  H_{6}  H_{31}^{-1}  H_{45}^{-1} 
 H_{19}^{-1}  H_{4}  H_{25}  H_{31}  H_{38} H_{24}.
\end{array} 
$$

\begin{rem}
One might want to compare this result with the well known formula for 
the genus $g$ of the associated Shimura curve (see e.g. \cite{Shimizu}).
Note that $2g = \dim (\Hom (\Gamma , \CC ) ) $ where $\Gamma = \{ 
g\in \Lambda ^{\times } \mid \nred(g) = 1\} /\langle -1 \rangle $. 
From our presentation we obtain that $\Gamma /\Gamma ' \cong \ZZ^{110}$, 
from which we get $110=2g = (19-1)(37-1)/6+2 $ as predicted. 
\end{rem}

\subsection{Quaternion algebras over imaginary quadratic fields}

The number of perfect forms and hence the performance of our algorithm
depends on the choice of the involution $\dagger $ on $A_{\RR }$. 
For certain quaternion algebras $A$, there is a canonical way to choose such an involution $\dagger $ that 
preserves $A$:

\begin{rem} 
Let $k$ be a CM-field with complex conjugation $\overline{\phantom{a}}$ and
let ${\mathcal Q} $ be a definite rational quaternion algebra with canonical involution $\overline{\phantom{a}}$. 
Then $$\dagger : {\mathcal Q}\otimes k \to {\mathcal Q} \otimes k ;  a\otimes k \mapsto \overline{a} \otimes \overline{k} $$
defines a positive involution on $A={\mathcal Q}\otimes k$.
\end{rem}

Using this involution we computed a few examples for imaginary quadratic fields $k$:
\\
We first fix the quaternion algebra ${\mathcal Q} = 
\left(\frac{-1,-1}{\QQ } \right)$ and vary the imaginary quadratic field $k=\mathbb{Q}(\sqrt{-d})$, with $-d \equiv 1 \pmod{8} $:
\begin{center}
 \begin{tabular}{|l|l|l|l|l|}
\hline
  d & Number of   & Runtime & Runtime  & Number of  \\
 & perfect forms & Vorono\"i  & Presentation & generators \\
 \hline
$7$ &$1$ & $1.24s$ & $0.42s$ & $2$\\
\hline
$31$ & $8$ & $6.16s$ & $0.50s$& $3$ \\
\hline
$55$ & $21$ & $14.69s$ & $1.01s$ & $5$ \\
\hline
$79$ & $40$ &$ 28.74s$ & $1.78s$ &$5$ \\
\hline
$95$ & $69$ & $53.78s$ &$2.57s$ &$7$ \\
\hline 
$103$& $53$ &$38.39s$ & $2.52s$ & $6$ \\
\hline
$111$ & $83$ & $66.16s$ & $3.02s$ & $6$ \\
\hline
$255$ & $302$ & $323.93s$ & $17.54s$ & $16$ \\
\hline 
 \end{tabular}
\end{center}

In the next example we fix the imaginary quadratic field $k$ to be $\mathbb{Q}(\sqrt{-7})$
and vary the rational quaternion algebra ${\mathcal Q}$ to obtain
$A = \left(\frac{a,b}{\mathbb{Q}(\sqrt{-7})}\right)$: 

\begin{center}
 \begin{tabular}{|l|l|l|l|l|l|}
\hline
  a,b & Norm of & Number of   & Runtime & Runtime  & Number of  \\
 & discriminant & perfect forms & Vorono\"i  & Presentation & generators \\
 \hline
$-1,-1$ & $4$ & $1$ & $1.24s$ & $0.42s$ & $2$ \\
\hline
$-1,-11$ & $121$ & $20$ & $21.61s$ & $4.13s$ & $6$ \\
\hline
$-11,-14$ & $484$ & $58$ & $51.46s$ & $5.11s$ & $10$ \\
\hline
$-1,-23$ & $529$ & $184$ & $179.23s$ & $89.34s$ & $16$ \\
\hline
\end{tabular}
\end{center}

\subsection{A division algebra of index 3} \label{ex:div3}

Let $\vartheta = \zeta _9 + \zeta _9^{-1}$ be
a real root of 
$x^3-3x+1\in \Q[x]$. 
Let $ A $ be the rational division 
algebra generated (as an algebra over $\Q $)
 by $Z:=\diag(\vartheta, \sigma(\vartheta),\sigma^2(\vartheta))$ and $\Pi := \left( \begin{array}{ccc} 0 & 1 & 0 \\ 0 & 0 & 1 \\ 2 & 0 & 0 \end{array} \right)$ where $\sigma $ generates the Galois group of 
$\Q[\vartheta ]$ over $\Q $. 
As the minimal polynomial of $\vartheta $ (and hence of $Z$)
 is congruent modulo 2 to
the minimal polynomial over $\Q_2$ of a seventh root of unity, $\Pi ^3=2$
and $\Pi Z \Pi^{-1} - Z^2 = 2 $, 
we see that the Hasse invariant of $A$ is $\frac{1}{3}$ 
at the prime 2.
We use \cite{NebeSteel} to compute a maximal order $\Lambda $ 
in $A$ and its discriminant 
$2^63^6$. So the only other ramified prime in $A$ is 3 and
its Hasse invariant is $\frac{2}{3}$. 
Let $\Gamma :=\Lambda ^{\times }$, $L=\Lambda $, $A \hookrightarrow
 A_{\R }$ via one of the embeddings of $\Q[\vartheta ] \hookrightarrow \R $. 
Then $\Lambda ^{\times }$ has 431 orbits on the set of $L$-perfect
forms in ${\mathcal P}$. After reducing the presentation 
obtained by the algorithm above with standard Magma programs 
we obtain that
$\Gamma = \langle a,b\rangle $ where
\begin{equation*} a:=\frac{1}{3} \left( \begin{array}{ccc}
-\vartheta^2 - 3\vartheta + 1 &  \vartheta^2 + 2 & -\vartheta^2 + 1 \\
2\vartheta^2 + 2\vartheta - 6 &  -2\vartheta^2 + \vartheta + 3 &  -\vartheta^2 -
    \vartheta + 6 \\
2\vartheta + 8 & -2\vartheta - 2 & 3\vartheta^2 + 2\vartheta - 7 
\end{array} \right), \end{equation*}
\begin{equation*}
b:=\frac{1}{3} \left( \begin{array}{ccc}
\vartheta^2 - 2\vartheta - 3 & -2\vartheta + 1 & -\vartheta^2 + 1 \\
2\vartheta^2 + 2\vartheta - 6 & -3\vartheta^2 - \vartheta + 5 & -2\vartheta^2 + 5 \\
4\vartheta^2 + 4\vartheta - 6 & -2\vartheta - 2  & 2\vartheta^2 + 3\vartheta - 5
\end{array} \right) \end{equation*}
with defining relators
$$\begin{array}{l} 
b^2  a^2  (b^{-1}  a^{-1})^2  , \\
    b^{-2}  (a^{-1}  b^{-1})^2   a  b^{-2}  a^2  b^{-3} , \\
    a  b^2  a^{-1}  b^3  a^{-2}  b  a  b^3 , \\
    a^2  b  a  b^{-2}  a  b^{-1}  (a^{-2}  b)^2 , \\
    a^{-1}  b^2  a^{-1}  b^{-1}  a^{-5}  b^{-2}  a^{-3}, \\
    b^{-2}  a^{-2}  b^{-1}  a^{-1}  b^{-1}  a^{-2}  b^{-1}  a^{-1} 
    b^{-2}  (a^{-1}  b^{-1})^3  .
\end{array} $$

\begin{rem}
Note that  $ A^{\op } =  A^{\Tr} $  
has Hasse invariant $\frac{2}{3}$ at 2 and $\frac{1}{3}$ at 3. 
As maximal orders of $A$ and $A^{\op }$ 
correspond to each other by transposing matrices, also their
unit groups are isomorphic (via $g\mapsto g^{-\Tr }$). 
Computing the Vorono\"{\i}-tessellation for the transposed matrices,
however, we find 410 perfect forms instead of 431, which shows that 
there is no direct correspondence on the level of perfect forms. 
\end{rem}

\subsection{A matrix ring over a quaternion algebra}

Consider the rational quaternion algebra $K=\left( \frac{-1,-3}{\QQ} \right)$, ramified at $3$ and the infinite place. Let $\mathcal{O}$ be the maximal order with $\ZZ$-basis $\left \{1,~ i , ~ \frac{1}{2}(i+k), ~ \frac{1}{2}(1+j) \right \}$ and let $A=K^{2\times 2}$, $\Lambda=\mathcal{O}^{2\times 2}$. The algebra $A$ is of interest as a direct summand of the rational group algebra of $\mathrm{SL}_2(5)$.

Our algorithm finds one perfect form with automorphism group of order $720$ (isomorphic to $\mathrm{SL}_2(9)$) and a presentation of $\Lambda^\times$ on the two generators
\begin{equation*}
 \frac{1}{2}\begin{pmatrix}2&0 \\ -1+i+j-k & -i-k \end{pmatrix}, ~ \frac{1}{2} \begin{pmatrix} i+k & -1+j \\ -2i & 2-i+k \end{pmatrix},
\end{equation*}
which have orders $4$ and $6$, respectively. These two generators satisfy a set of $64$ relations, which is too large to be printed here. The commutator factor group $\Lambda^\times / (\Lambda^\times)'$ is cyclic of order $4$.

%\subsection{A matrix ring over an imaginary quadratic number field}
%
%Let $K=\QQ(\sqrt{-15})$ with ring of integers $\mathcal{O}$ and $A=K^{2\times 2}$. Choose $\Lambda=\mathcal{O}^{2\times 2}$. Then we obtain the following presentation for $\Lambda^\times$.
%
%\begin{equation*}
 %\Lambda^\times =\mathrm{GL}_2(\mathcal{O})\cong \left \langle a,b,c,d,e ~\Big|~ \substack{ b^2=e^4=(da)^2=cac^{-1}a=(ba)^2= (eb)^2 = a^2e^{-2}a = de^2d^{-1}e^{-2} = (dea^{-1})^2\\ = (a^{-1}de^{-1})^2 = d^{-1}a^{-3}d e^2 = (e^{-2} a^{-1})^3 = cde^{-1}d^{-1}ce (d^{-1} a)^2}  \right \rangle ,
%\end{equation*}
%where the generators are given by
%\begin{equation*}
%a=\begin{pmatrix} 1 & -1 \\ 1 & 0 \end{pmatrix},~ b=\begin{pmatrix} 0 & -1 \\ -1 & 0\end{pmatrix} ,~ c=\frac{1}{2}\begin{pmatrix}  7+\sqrt{-15}  & -2\sqrt{-15} \\ 7-\sqrt{-15} & -7-\sqrt{-15}\end{pmatrix} ,
%\end{equation*}
%\begin{equation*}
%d=\frac{1}{2}\begin{pmatrix} -1+\sqrt{-15} & 2 \\ 1+\sqrt{-15} & 2\end{pmatrix} , ~  e=\begin{pmatrix} 0&-1 \\ 1 & 0\end{pmatrix} .
%\end{equation*}

\section{Implementation}

While many things in the implementation of our algorithms are straightforward, there are some tasks which do not have an obvious solution. We present these here.

\subsection{Minimal vectors}\label{imp:minvecs}

Let $F\in \mathcal{P}$ be a form. In order to compute $S_L(F)$, the set of $L$-minimal vectors of $F$, we associate to $F$ a $\ZZ$-lattice $L_F$ and compute the minimal vectors of that lattice, e.g. using \texttt{Magma} \cite{magma}.

Let $\mathcal{B}$ be a $\ZZ$-basis of $L$. We associate to the form $F$ the following bilinear form $b_F$ on $V_\RR$:
\begin{equation*}b_F ~:~ V_\RR\times V_\RR \to \RR, ~ (x,y)\mapsto \frac{1}{2}\left(F[x+y]-F[x]-F[y]\right) \end{equation*}
Clearly, $b_F$ is positive definite since $F\in\mathcal{P}$. Now let $L_F$ be the $\ZZ$-lattice which has as its Gram matrix the Gram matrix of $b_F$ with respect to the basis $\mathcal{B}$. Then, since $F[\ell]=b_F(\ell, \ell)$ for all $\ell\in L$, the minimal vectors of $L_F$ are the coordinates of the minimal vectors of $F$ with respect to the basis $\mathcal{B}$.

\subsection{Isometry testing and automorphism groups}

We want to decide algorithmically if two forms are in the same orbit under the action
\begin{equation*}
 \Lambda^\times\times \mathcal{P}\to\mathcal{P}, ~ (\lambda , F ) \mapsto \lambda^\dagger F \lambda
\end{equation*}
of the unit group $\Lambda^\times$.

First, consider the case where $A=K$ is a division algebra. Then we choose $L=\Lambda$.
 In particular, the minimal vectors of our forms are elements of the order $\Lambda$. 

\begin{lemma}
Let $F_1,F_2\in {\mathcal P}$, $\ell  \in S_L(F_1)$.
 There is a $\lambda \in \Lambda^\times$ satisfying $\lambda^\dagger F_2 \lambda = F_1$ 
if and only if there is some $\ell_2 \in S_L(F_2)$ such that $(\ell_2 \ell^{-1})^\dagger F_2 \ell_2 \ell^{-1} = F_1$ and $\ell_2\ell^{-1}\in\Lambda^\times$.\\
 Also, we have 
 \begin{equation*}\Stab_{\Lambda^\times}(F_1)=\{\ell_1 \ell^{-1} ~|~ (\ell_1 \ell^{-1})^\dagger F_1 \ell_1 \ell^{-1} = F_1, ~\ell_1\ell^{-1}\in\Lambda^\times, ~\ell_1 \in S_L(F_1)\}.\end{equation*}
\end{lemma}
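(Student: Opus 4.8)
The plan is to exploit the fact that for a division algebra with $L=\Lambda$, the action of a unit on forms is closely tied to multiplication on minimal vectors. The key observation is that the minimal vectors of a form transform in a controlled way under the group action: if $\lambda\in\Lambda^\times$ satisfies $\lambda^\dagger F_2\lambda=F_1$, then $\lambda$ maps $S_L(F_1)$ bijectively onto $S_L(F_2)$, because $\min_L(F_1)=\min_L(F_2)$ and $F_2[\lambda\ell']=F_1[\ell']$ for all $\ell'\in L$ (using that $\lambda$ is a unit, so $\lambda L=L$). I would begin by recording this bijection explicitly.

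For the first equivalence I would argue both directions. For the ``only if'' direction, suppose $\lambda\in\Lambda^\times$ with $\lambda^\dagger F_2\lambda=F_1$. Fix the chosen $\ell\in S_L(F_1)$ and set $\ell_2:=\lambda\ell$. Then $\ell_2\in S_L(F_2)$ by the transformation of minimal vectors noted above, and $\ell_2\ell^{-1}=\lambda\ell\ell^{-1}=\lambda\in\Lambda^\times$ (here the division algebra hypothesis is essential, since $\ell^{-1}$ makes sense in $K$ and $\ell,\ell_2\in\Lambda$ with $\ell_2\ell^{-1}=\lambda$ a unit). This $\ell_2$ then satisfies $(\ell_2\ell^{-1})^\dagger F_2\,\ell_2\ell^{-1}=\lambda^\dagger F_2\lambda=F_1$, as required. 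For the ``if'' direction, the element $\lambda:=\ell_2\ell^{-1}$ is by hypothesis in $\Lambda^\times$ and directly satisfies $\lambda^\dagger F_2\lambda=F_1$, so there is nothing further to prove.

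The formula for the stabilizer is the special case $F_2=F_1$. I would observe that $\Stab_{\Lambda^\times}(F_1)=\{\lambda\in\Lambda^\times\mid \lambda^\dagger F_1\lambda=F_1\}$, and apply the first part with $F_2=F_1$: every such $\lambda$ arises as $\ell_1\ell^{-1}$ for some $\ell_1:=\lambda\ell\in S_L(F_1)$, and conversely every $\ell_1\ell^{-1}\in\Lambda^\times$ fixing $F_1$ is such a stabilizer element. This gives the displayed set equality, with the chosen vector $\ell\in S_L(F_1)$ serving as a fixed reference point.

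I do not expect any deep obstacle here; the proof is essentially a bookkeeping argument once the transformation rule for minimal vectors is stated. The one point requiring genuine care is the role of the division algebra assumption: it guarantees that $\ell^{-1}$ exists in $K$ and that expressions like $\ell_2\ell^{-1}$ are meaningful elements of $A=K$, so that membership in $\Lambda^\times$ is a well-posed condition. I would make sure to emphasize that $\lambda L=L$ for units (so that minimality is preserved) and that the reduction to a finite search over $S_L(F_2)$ is exactly what makes this lemma computationally useful, connecting it to the isometry-testing framework described in the surrounding text.
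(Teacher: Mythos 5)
Your proposal is correct and follows essentially the same route as the paper, whose entire proof is the one-line observation that $\lambda^{\dagger}F_2\lambda=F_1$ implies $\lambda^{-1}S_L(F_2)=S_L(F_1)$ --- precisely the transformation rule for minimal vectors you establish and then unwind via $\ell_2:=\lambda\ell$. You merely spell out what the paper leaves as ``easily verified,'' including the (harmless) points that $\ell\neq 0$ is invertible in the division algebra $K$ and that $\lambda L=L$ for $\lambda\in\Lambda^{\times}$ with $L=\Lambda$.
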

\begin{proof}
 If we have $\lambda^\dagger F_2 \lambda = F_1$, then $\lambda^{-1}S_L(F_2)=S_L(F_1)$, which is easily verified. This proves both claims.
\end{proof}

This lemma allows us to check for isometry and to compute automorphism groups of forms using only our knowledge of the finite sets of minimal vectors and without any a-priori knowledge of $\Lambda^\times$.
The membership $\lambda \in \Lambda^\times$ is tested by checking that both $\lambda $ and 
$\lambda ^{-1} $ are in the free abelian group $\Lambda $.

We now turn to the general case, where $A$ is a simple algebra  and 
$\mathfrak M = \End _{\OO }(L )$ for some $\Lambda $-lattice $L$ in $V$. 
We will only describe the computation of the automorphism group of a form $F\in\mathcal{P}$. Isometry testing is completely analogous.
We use the notation from \ref{imp:minvecs}.

Consider the $\QQ$-linear representation of $A$ induced by the action of $A$ on $V$ with respect to the basis $\mathcal{B}$. If we compute the automorphism group of the $\ZZ$-lattice $L_F$ with the Plesken-Souvignier algorithm \cite{PS97}, the result of this is a finite group of $|\mathcal{B}|\times|\mathcal{B}|$-matrices isomorphic to $\mathrm{Aut}_\ZZ(L_F)$. However, in general not all of these matrices will be contained in the image of $A\hookrightarrow \QQ^{|\mathcal{B}|\times|\mathcal{B}|}$. In order to compute only those automorphisms which satisfy this additional condition we make use of the fact that the Plesken-Souvignier algorithm may be given an additional input, namely a list of matrices which is to be fixed by the resulting lattice automorphisms. 

\begin{lemma}
 Let $\{b_1,...,b_\delta\}$ be a basis of the centralizer of the image $A\hookrightarrow \QQ^{|\mathcal{B}|\times|\mathcal{B}|}$. The matrices $X\in\GL_{|\mathcal{B}|}(\ZZ)$ stabilizing $L_F$ and fixing $\mathrm{Gram}(L_F)\cdot b_1,...,\mathrm{Gram}(L_F)\cdot b_\delta$ are contained in the image of $A$.
\end{lemma}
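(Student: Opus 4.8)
The plan is to reduce both defining conditions on $X$ to the single condition that $X$ commutes with each $b_i$, and then to invoke the double centralizer theorem. First I would record the algebraic input. Write $C$ for the centralizer of the image of $A$ in $\QQ^{|\mathcal{B}|\times|\mathcal{B}|}=\End_\QQ(V)$, so that $\{b_1,\dots,b_\delta\}$ is by hypothesis a $\QQ$-basis of $C$. Since $V=K^n$ is the simple (hence faithful and semisimple) left $A$-module, one has $C=\End_A(V)$, and the double centralizer theorem, applied to the semisimple algebra $A$, asserts that the image of $A$ is exactly the centralizer of $C$. Consequently a matrix $Y\in\QQ^{|\mathcal{B}|\times|\mathcal{B}|}$ lies in the image of $A$ if and only if $Y$ commutes with every element of $C$, equivalently with each of the basis elements $b_1,\dots,b_\delta$.

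Next I would translate the two hypotheses on $X$ into matrix identities. Put $G:=\mathrm{Gram}(L_F)$, which is symmetric and positive definite, in particular invertible. That $X$ stabilises $L_F$ means it preserves the bilinear form $G$ under the congruence action, i.e. $X^{\mathrm{tr}}GX=G$; that $X$ fixes $G\cdot b_i$ in the sense of the auxiliary forms handed to the Plesken–Souvignier algorithm means $X^{\mathrm{tr}}(Gb_i)X=Gb_i$ for each $i$ (the identity makes sense even though $Gb_i$ is in general not symmetric). The computation is then immediate: from $X^{\mathrm{tr}}GX=G$ and invertibility of $X$ we get $X^{\mathrm{tr}}G=GX^{-1}$, and substituting this into $X^{\mathrm{tr}}(Gb_i)X=Gb_i$ yields $GX^{-1}b_iX=Gb_i$. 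Cancelling the invertible matrix $G$ gives $X^{-1}b_iX=b_i$, that is $Xb_i=b_iX$. Hence $X$ commutes with each $b_i$, and by the first paragraph $X$ lies in the image of $A$.

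The only genuinely nontrivial ingredient, and hence the main point requiring care, is the double centralizer theorem together with the identification $C=\End_A(V)$; this is where the simplicity of $A$ (ensuring that $V$ is a faithful semisimple module, so that $A$ is its own bicommutant) is used. The remaining obstacle is purely one of bookkeeping: one must match the Plesken–Souvignier notion of ``fixing'' an auxiliary matrix $M$ to the congruence condition $X^{\mathrm{tr}}MX=M$ (with whichever of the two transpose conventions the implementation adopts), after which the deduction is the single cancellation above.
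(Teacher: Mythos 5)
Your proposal is correct and follows essentially the same route as the paper: translate stabilisation of $L_F$ and fixing of the $Gb_i$ into $X^{\mathrm{tr}}GX=G$ and $X^{\mathrm{tr}}Gb_iX=Gb_i$, cancel to obtain $Xb_i=b_iX$, and conclude by the double centralizer theorem (the paper cites \cite[(7.11)]{reiner}). Your substitution $X^{\mathrm{tr}}G=GX^{-1}$ is just a rearrangement of the paper's one-line computation $X^{\mathrm{tr}}Gb_iX=Gb_i=X^{\mathrm{tr}}GXb_i$, and your extra justification that $V$ is a faithful semisimple $A$-module is a harmless elaboration of what the citation covers.
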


\begin{proof}
 Let $G:=\mathrm{Gram}(L_F)$ and consider $X\in\ \GL_{|\mathcal{B}|}(\ZZ)$ as in the statement. Then we have $X^{tr}GX=G$ and consequently, for all $1\leq i \leq \delta$, $X^{tr}Gb_i X=Gb_i=X^{tr}GX b_i$. This implies $b_iX=Xb_i$ for all $i$. It now follows from the double-centralizer theorem \cite[(7.11)]{reiner} that $X$ is contained in the image of the representation of $A$.
\end{proof}

Since $\mathfrak M=\End_\mathcal{O}(L)$, the matrices $X$ from the previous lemma are actually contained in the image of $\mathfrak M$ under the representation we considered. 
Therefore, using this lemma, we can compute the stabilizer $\Stab_{\mathfrak M^\times}(F)$ 
and its intersection 
$\Stab _{\Lambda ^*} (F) = \Stab _{\mathfrak M^{\times }} (F) \cap \Lambda $ 
without prior knowledge of $\Lambda^\times$.

\subsection{Outline of an implementation}

A detailed exposition of Vorono\"{\i}'s algorithm is beyond the scope of this paper, so we refer the reader to \cite{MR1957723,MR1881760} for details on the theory of the algorithm. The purpose of this section is to provide an overview of the steps necessary to implement our algorithms.

First of all, it should be noted that one cannot carry out precise computations in $A_\RR = A\otimes_\QQ \RR$. 
However in order to carry out the computations it is only necessary to embed $A$ into a semisimple algebra
${\mathcal A}$ with positive involution.
 If, for example, $A$ is a quaternion algebra with positive involution, we compute in $A$ itself rather than in $A_\RR$. 
In the case of the example in section \ref{ex:div3} we use an embedding of $A$ into 
${\mathcal A} = \Q[\vartheta ]^{3\times 3}$, the involution in that case being transposition. 
Note that one never needs to compute in $A_{\RR }$, as all perfect forms already 
lie in ${\mathcal A}$, however 
the number of perfect forms depends on the choice of  this algebra ${\mathcal A}$.

Assuming the reader has a suitable version of Vorono\"{\i}'s algorithm at his disposal, we now move on to the computation of a presentation of $\Lambda^\times$. As a by-product of the algorithm, we already have the facets of the Vorono\"{\i} domains. The codimension-$2$-faces are easily computed as intersections of facets. These codimension-$2$-faces (to which we will refer as "ridges" in what follows) correspond to the $2$-cells of the CW-complex described in section \ref{wr} and for computational simplicity we perform our calculations using the faces.

Notice that the side transformations (\cf Remark \ref{sidetransformation}) obtained by Vorono\"{\i}'s algorithm together with the stabilizers of the perfect forms generate $\Lambda^\times$. This is the set of generators we use in our implementation. However, this set of generators does not coincide with the generators described in Section \ref{BassSerre}.

Let $\mathcal{V}$ be a set of representatives of perfect forms obtained from Vorono\"{\i}'s algorithm. In order to compute the cycle relation corresponding to an ridge $\mathfrak{e}$ contained in the boundary of the Vorono\"{\i} domain $D_P$ of a perfect form $P=P_0 \in \mathcal{V}$ one should proceed as follows. Note that there is a finite sequence of perfect forms $P_i\in\mathcal{P}$, $0\leq i \leq \ell$, such that $D_{P_i}$ and $D_{P_{i+1}}$ meet precisely in a facet, $\mathfrak{e}\subset D_{P_i}$ for all $i$ and $P_0=P_\ell$. We now proceed iteratively: In the $i^{\text{th}}$ step we construct an element $g\in\Lambda^\times$ as a product of side-transformation (i.e. generators of $\Lambda^\times$) such that $g^\dagger P_i g \in \mathcal{V}$. So in the $\ell^{\text{th}}$ step we will have $g^\dagger P_\ell g \in \mathcal{V}$, which means $g\in\Stab_{\Lambda^\times}(P)$, yielding a relation. Start this calculation by identifying a facet $\mathfrak{f}$ of $D_P$ containing $\mathfrak{e}$, set $P_1$ to be the perfect form contiguous to $P$ through the facet $\mathfrak{f}$, $g_1$ the corresponding side transformation and $g:=g_1$. In the $i^{\text{th}}$ step, identify the facet $\mathfrak{f}$ of the 
Vorono\"{\i} domain of $P_{i-1}$ containing $\mathfrak{e}$ and satisfying $\mathfrak{f}\cap D_{P_{i-2}} \neq \mathfrak{f}$ (this can be done by computing in the 
Vorono\"{\i} domain of $g^\dagger P_{i-1} g \in \mathcal{V}$), and let $P_i$ be the perfect neighbour of $P_{i-1}$ through $\mathfrak{f}$, $g_i$ the side transformation corresponding to the facet $g^{-1}\mathfrak{f}g^{-\dagger}$ of $D_{g^\dagger P_{i-1}g}$ and replace by $g$ by $g g_i$.

The remaining relations described in Section \ref{BassSerre} are easily computed.
Finally it is useful to employ a computer algebra system capable of handling finitely presented groups in order to simplify the presentation.

The implementation of the algorithm to solve the word problem is straightforward since it merely amounts to computing the intersection of an affine line with an affine polytope.

%SOMETHING ABOUT NOT COMPUTING IN $\RR\otimes A$....

%The first step is to implement a suitable version of Vorono\"i's algorithm, which poses two more difficulties other than computing minimal vectors and isometries, namely handling the Vorono\"i domains (i.e. computing the facets) and finding the perfect neighbours. For the treatment of the Vorono\"i domains we use the software \texttt{QHull} \cite{qhull}. There are certainly various other software packages available to compute facets of polyhedral cones. Once the facets have been computed, finding the \emph{directions} is simple linear algebra and one can move on to finding the perfect neighbours. This basically amounts to finding the real number $\lambda$ from Lemma \ref{deadend}. Let $P$ be a perfect form whose neighbours we wish to find and $F$ a direction. As a first step, we use a sequence of nested intervals to find $\lambda'>0$ such that $P+\lambda' F$ TO BE COMPLETED.....

%When implementing Vorono\"i's algorithm, one should do some "bookkeeping", because many by-products of the algorithm are needed for the computation of $\Lambda^\times$, e.g. facets, side transformations and stabilizers of perfect forms.

%After this first step one should compute the codimension-two-faces of the Vorono\"i domains in order to be able to compute cycle relations. Since the presentation of $\Lambda^\times$ obtained in the manner we describe in Theorem \ref{maintheorem} is usually not reduced, 

%\nocite{MR2073916}
%\bibliographystyle{amsalpha}
%%\bibliographystyle{amsplain}
%%\bibliography{biblio}
%\bibliography{../refs}
%\end{document}

\providecommand{\bysame}{\leavevmode\hbox to3em{\hrulefill}\thinspace}
\providecommand{\MR}{\relax\ifhmode\unskip\space\fi MR }
 
\providecommand{\MRhref}[2]{%
  \href{http://www.ams.org/mathscinet-getitem?mr=#1}{#2}
}
\providecommand{\href}[2]{#2}

\end{document}